\newcommand {\new} {\newcommand}
\newcommand \oper[2] {\new #1 {\operatorname{#2}}}
\newcommand \gcode[1] {\ulcorner\! #1 \!\urcorner}	
\newcommand \se [1] { \{ #1 \}}			
\newcommand\set [2]{ \{#1:#2\} }			
\newcommand\res {\!\upharpoonright\!}		
\newcommand\eqiv {\leftrightarrow}			
\newcommand\corner[1] {
  \langle #1 \rangle}		
\newcommand\coll{{\text{Coll}}}
\newcommand\concat {{^\frown}}   
\oper{\Ord}{Ord}				
\oper{\hull}{Hull}				
\oper{\ppt}{ppt} 				
\newcommand{\ord} {\Ord}
\oper{\ZFC}{ZFC}				
\oper{\rank}{rank}				
\oper{\crit}{cr}					
\oper{\crt}{crt}					
\oper{\cf}{cf}					
\oper{\height}{ht}				
\oper{\wfcore}{wfcore}					
\oper{\core}{core}					
\oper{\Ult}{Ult}				
\oper{\Cone}{Cone}				
\oper{\dirlim}{dirlim}
\oper{\rud}{rud}		
\oper{\const}{const}
\oper{\OD}{OD}
\oper{\final}{final}
\oper{\HYP}{HYP}
\oper{\wfp}{wfp}
\oper{\Hom}{Hom}
\new{\ult} {\Ult}
\oper{\dom}{dom}
\oper{\rep}{rep}
\oper{\suc}{succ}
\oper{\fac}{fac}
\oper{\Code}{Code}
\oper{\ran}{ran}
\oper{\maxdom}{maxdom}
\oper{\maxran}{maxran}
\oper{\lp}{Lp} 
\newtheorem{theorem}{Theorem}[section]
\newtheorem{lemma}[theorem]{Lemma}
\newtheorem{claim}[theorem]{Claim}
\theoremstyle{definition}
\newtheorem{question}[theorem]{Question}
\newtheorem{definition}[theorem]{Definition}
\newcommand{\game}{{\Game}}
\newcommand{\boldpi}[1]{{\boldsymbol{\Pi}^1_{#1}}}
\newcommand{\boldsigma}[1]{{\boldsymbol{\Sigma}^1_{#1}}}
\newcommand{\boldgameclass}[1]{{\game ^{#1}(<\!\omega^2\textnormal{-}\mathbf\Pi{^1_{1}})}}
\newcommand{\bolddelta}[1]{{\boldsymbol{{\delta}}^1_{#1}}}
\newcommand{\boldDelta}[1]{{\boldsymbol{{\Delta}}^1_{#1}}}
\newcommand{\admistwo}[1]{L_{\kappa_3^{#1}}[T_2,#1]}
\newcommand{\WO}{{\textrm{WO}}}
\newcommand{\DEF}{=_{{\textrm{DEF}}}}
\newcommand{\Diff}{\operatorname{Diff}}
\newcommand{\sharpcode}[1]{  \left|   #1  \right|}
\newcommand{\lessthanshort}[1]{{<\! #1}}
\newcommand{\comm}[1]{{}}
\oper{\ucf}{ucf}					
\oper{\sign}{sign}					
\oper{\lh}{lh}
\begin{document}

\title{The higher sharp I: on $M_1^{\#}$}

\author{Yizheng Zhu}

\affil{Institut f\"{u}r mathematische Logik und Grundlagenforschung \\
Fachbereich Mathematik und Informatik\\
Universit\"{a}t M\"{u}nster\\
Einsteinstr. 62 \\
48149 M\"{u}nster, Germany 
}
\maketitle

\begin{abstract}
We establish the descriptive set theoretic representation of the mouse $M_n^{\#}$, which is called $0^{(n+1)\#}$. This part deals with the case $n=1$.
\end{abstract}



\section{Introduction to this series of papers}
\label{sec:introduction}

This is an introduction to a series of four papers. 

The collection of projective subsets of $\mathbb{R}$ is the minimum one which contains all the Borel sets and is closed under both complements and continuous images. Despite its natural-looking definition, many fundamental problems about projective sets are undecidable in ZFC, for instance, if all projective sets are Lebesgue measurable. The axiom of Projective Determinacy (PD)  is the most satisfactory axiom that settles these problems by producing a rich structural theory of the projective sets.
PD implies certain regularity properties of projective sets: all projects of reals are Lebesgue measurable (Mycielski, Swierczkowski), have the Baire property (Banach, Mazur) and are either countable or have a perfect subset (Davis) (cf.~ \cite{mos_dst}). The structural theory of the projective sets are centered at good Suslin representations of projective sets. Moschovakis~ \cite{mos_dst} shows that PD implies the scale property of the pointclasses $\boldpi{2n+1}$ and $\boldsigma{2n+2}$. It follows that there is a nicely behaved tree $T_{2n+1}$ that projects to the good universal $\Sigma^1_{2n+2}$ set. 
So the analysis of $\boldsigma{2n+2}$ sets is reduced to that of the tree $T_{2n+1}$, the canonical model $L[T_{2n+1}]$ and its relativizations. 
The canonicity of $L[T_{2n+1}]$ is justified by Becker-Kechris \cite{becker_kechris_1984} in the sense that $L[T_{2n+1}]$ does not depend on the choice of $T_{2n+1}$. The model $L[T_{2n+1}]$ turns out to have many analogies with $L= L[T_1]$.  These analogies support the generalizations of classical results on $\boldsigma{2}$ sets to $\boldsigma{2n+2}$ sets.

The validity of PD is further justified by Martin-Steel \cite{martin_steel_PD}. They show that PD is a consequence of large cardinals: if there are $n$ Woodin cardinals below a measurable cardinal, then $\boldpi{n+1}$ sets are determined.  Inner model quickly developed into the region of Woodin cardinals. $M_n^{\#}$, the least active mouse with $n$ Woodin cardinals, turns out to have its particular meaning in descriptive set theory.
Martin \cite{martin_largest} (for $n=0$) and Neeman \cite{nee_opt_I,nee_opt_II} (for $n\geq 1$) show that $M_n^{\#}$ is many-one equivalent to the good universal $\game^{n+1} ( \lessthanshort{\omega^2}\text{-}\Pi^1_1)$ real. 
 Steel \cite{steel_hodlr_1995} shows that  $L[T_{2n+1}] = L[M^{\#}_{2n,\infty} | \bolddelta{2n+1}]$ where $M^{\#}_{2n,\infty}$ is the direct limit of all the countable iterates of $M^{\#}_{2n}$, and that $\bolddelta{2n+1}$ is the least cardinal that is strong up to the least Woodin of $M_{2n,\infty}^{\#}$. This precisely explains the analogy between $L[T_{2n+1}]$ and $L$. The mechanism of inner model theory is therefore applicable towards understanding the structure $L[T_{2n+1}]$. 

In this series of  papers, we generalize the Silver indiscernibles for $L$ to the level-($2n+1$) indiscernibles of $L[T_{2n+1}]$. The theory of $L[T_{2n+1}]$ with the level-($2n+1$) indiscernibles will be called $0^{(2n+1){\#}}$, which is many-one equivalent to $M_{2n}^{\#}$. At the level of mice with an odd number of Woodins, $M_{2n-1}^{\#}$ is the optimal real with the basis result for $\Sigma^1_{2n+1}$ sets (cf. \cite[Section 7.2]{steel-handbook}): Every nonempty $\Sigma^1_{2n+1}$ set has a member recursive in $M_{2n-1}^{\#}$.  The basis result for $\Sigma^1_{2n+1}$ was originally investigated in \cite{Q_theory}, with the intention of generalizing Kleene's basis theorem: Every nonempty $\Sigma^1_1$ set of real has a member recursive in Kleene's $\mathcal{O}$. The real $y_{2n+1}$, defined  \cite{Q_theory}, turns out $\Delta^1_{2n+1}$ equivalent to $M_{2n-1}^{\#}$. 
In this series of papers, we define the canonical tree $T_{2n}$ that projects to a good universal $\Pi^1_{2n}$ set. 
It is the natural generalization of the Martin-Solovay tree $T_2$ that projects a good universal $\Pi^1_2$ set. We show that  $L_{\kappa_{2n+1}}[T_{2n}]$, the minimum admissible set over $T_{2n}$, shares most of the standard properties of $L_{\omega_1^{CK}}$, in particular, the higher level analog of the Kechris-Martin theorem \cite{Kechris_Martin_I,Kechris_Martin_II}. We define $0^{(2n){\#}}$ as the set of truth values in $L_{\kappa_{2n+1}}[T_{2n}]$ for formulas of complexity slightly higher than $\Sigma_1$. $0^{(2n){\#}}$ is many-one equivalent to both $M_{2n+1}^{\#}$ and $y_{2n+1}$. Summing up, we have 
\begin{displaymath}
  0^{(n+1)\#} \equiv_m M_n^{\#}.
\end{displaymath}

We start to give a detailed explanation of the influence of the higher sharp in the structural theory of projective sets and in inner model theory. 
The set theoretic structures tied to $\Pi^1_1$ sets are $L_{\omega_1^{CK}}$ and its relativizations. 
The classical results on $\Pi^1_1$ sets and $L_{\omega_1^{CK}}$ include:
\begin{enumerate}
\item (Model theoretic representation of $\Pi^1_1$) $A  \subseteq \mathbb{R}$ is $\Pi^1_1$ iff there is a $\Sigma_1$ formula $\varphi$ such that $x \in A \eqiv L_{\omega_1^x}[x] \models \varphi(x)$. 
\item (Mouse set) $x \in \mathbb{R} \cap L_{\omega_1^{CK} }$ iff $x  $ is $\Delta^1_1$ iff $x$ is $\Delta^1_1$ in a countable ordinal.
\item (The transcendental real over $L_{\omega_1^{CK}}$) $\mathcal{O}$ is the $\Sigma_1$-theory of $L_{\omega_1^{CK}}$. 
\item ($\Pi^1_1$-coding of ordinals below $\omega_1$) $x \in \WO$ iff $x$ codes a wellordering of a subset of $\omega$. Every ordinal below $\omega_1$ is coded by a member of $\WO$. $\WO$ is $\Pi^1_1$. 
\end{enumerate}
$\Sigma^1_2$ sets are $\omega_1$-Suslin via the Shoenfield tree $T_1$. 
The complexity of $T_1$ is essentially that of $\WO$, or $\Pi^1_1$.  The set-theoretic structures in our attention are $L=L[T_1]$ and its relativizations. Assuming every real has a sharp, the classical results related to $L$ include:
\begin{enumerate}
\item (Model theoretic representation of $\Sigma^1_2$) $A  \subseteq \mathbb{R}$ is $\Sigma^1_2$ iff there is a $\Sigma_1$ formula $\varphi$ such that $x \in A \eqiv L[x] \models \varphi(x)$. 
\item (Mouse set) $x \in \mathbb{R} \cap L $ iff $x$ is $\Delta^1_2$ in a countable ordinal.
\item (The transcendental real over $L$) $0^{\#} $ is the theory of $L$ with Silver indiscernibles, or equivalently, the least active sound mouse projecting to $\omega$.
\item ($\Delta^1_3$-coding of ordinals below $u_{\omega}$) $\WO_{\omega}$ is the set of sharp codes. Every ordinal $\alpha <u_{\omega}$ has a sharp code $\corner{\gcode{\tau}, x^{\#}}$ so that $\alpha = \tau^{L[x]}(x,u_1,\dots,u_k)$. The comparison of sharp codes is $\Delta^1_3$. 
\end{enumerate}

Inner model theory start to participate at this level. 
Based on the theory of sharps for reals, the Martin-Solovay tree $T_2$ is defined. $T_2$ is essentially a tree on $u_{\omega}$. The complexity of $T_2$ is $\Delta^1_3$ via the sharp coding of ordinals. 

$\Sigma^1_3$ sets are $u_{\omega}$-Suslin via the Martin-Solovay tree $T_2$. 
The structures tied to $\Pi^1_3$ sets are $L_{\kappa_3}[T_2]$ and its relativizations. The theory at this level is in parallel to $\Pi^1_1$ sets and $L_{\omega_1^{CK}}$:
\begin{enumerate}
\item (Model theoretic representation of $\Pi^1_3$, \cite{Kechris_Martin_I,Kechris_Martin_II}) $A  \subset \mathbb{R}$ is $\Pi^1_3$ iff there is a $\Sigma_1$ formula $\varphi$ such that $x \in A \eqiv L_{\kappa_3^x}[T_2,x] \models \varphi(T_2,x)$. 
\item (Mouse set, \cite{Q_theory,Kechris_Martin_I,Kechris_Martin_II,steel_projective_wo_1995}) $x \in \mathbb{R} \cap L_{\kappa_3}[T_2]$ iff $x $ is $\Delta^1_3$ in a countable ordinal iff $x \in \mathbb{R} \cap M_1^{\#}$. 
\item (The transcendental real over $L_{\kappa_3}[T_2]$, Theorem~\ref{thm:2sharp-equivalent-to-M1sharp})  $M_1^{\#} \equiv_m 0^{2\#}$.
\item ($\Pi^1_3$-coding of ordinals below $\bolddelta{3}$, essentially by Kunen in \cite{sol_delta13coding}) $\WO^{(3)}$ is the set of reals that naturally code a wellordering of $u_{\omega}$. $\WO^{(3)}$ is $\Pi^1_3$. 
\end{enumerate}
In general, if $\Gamma$ is a pointclass, $\alpha$ is an ordinal, and $f : \mathbb{R}  \twoheadrightarrow \alpha$ is a surjection, then $\Code(f) = \set{(x,y)}{f(x) \leq f(y)}$ and $f$ is in $\Gamma$ iff $\Code(f)$ is in $\Gamma$; $\alpha$ is $\Gamma$-wellordered cardinal iff there is a surjection $f: \mathbb{R} \twoheadrightarrow \alpha$ such that $f$ is in $\Gamma$ but there is no $\beta< \alpha$ and surjections $g: \mathbb{R}\twoheadrightarrow   \beta$, $h : \beta \twoheadrightarrow \alpha$ such that both  $g$  and $\set{(x,y)}{f(x) = h \circ g(y)}$ is in $\Gamma$. 
The above list can be continued:
\begin{enumerate}\setcounter{enumi}{4}
\item 
  The uncountable $\boldDelta{3}$ wellordered cardinals are $(u_k:1 \leq k\leq \omega)$. 
\end{enumerate}
The heart of the new knowledge at this level is the equality of pointclass in Theorem~\ref{thm:pointclass_game_exchange}: $\game^2 ( <\! \omega^2 \text{-}\Pi^1_1) = <\!u_{\omega} \text{-}\Pi^1_3.$ This is the main objective of this paper. 
 Philosophically speaking, as $\game^2 \Pi^1_1 = \Pi^1_3$, this equality reduces the ``non-linear'' part $\game^2$ to the ``linear'' part $\lessthanshort{u_{\omega}}$. 
Based on this equality, $0^{2\#}$ is defined to be the set of truth of $L_{\kappa_3}[T_2]$ for formulas of complexity slightly larger than $\Sigma_1$, cf.\ Definitions~\ref{def:OT2x}-\ref{def:2sharp}. $0^{2\#}$ is essentially $y_3$, defined in \cite{Q_theory}. It is a good universal $<\!u_{\omega} \text{-}\Pi^1_3$ subset of $\omega$. The many-one equivalence $M_1^{\#} \equiv_m 0^{2\#}$ is thus obtained using Neeman \cite{nee_opt_I,nee_opt_II}.  Under $AD$, we have $u_k = \aleph_k$, and  \cite{kechris_ad_proj_ord} summarizes the further structural theory at this level. 
The expression of $0^{2\#}$ opens the possibility of running recursion-theoretic arguments in $L_{\kappa_3}[T_2]$ that generalize those in $L_{\omega_1^{CK}}$. 

The Moschovakis tree $T_{2n+1}$ projects to the good universal $\Sigma^1_{2n+2}$ set. 
The structures tied to $\Sigma^1_{2n+2}$ sets are $L[T_{2n+1}]$ and its relativizations. $L[T_{2n+1}]$ is the higher level analog of $L$: 
\begin{enumerate}
\item (Model theoretic representation of $\Sigma^1_{2n+2}$) $A  \subseteq \mathbb{R}$ is $\Sigma^1_{2n+2}$ iff there is a $\Sigma_1$ formula $\varphi$ such that $x \in A \eqiv L[T_{2n+1},x] \models \varphi(T_{2n+1},x)$. 
\item (Mouse set, \cite{steel_projective_wo_1995})  $x \in \mathbb{R} \cap L[T_{2n+1}]$ iff $x $ is $\Delta^1_{2n+2}$ in a countable ordinal iff $x \in \mathbb{R} \cap M_{2n}^{\#}$. 
\item (The transcendental real over $L[T_{2n+1}]$, to be proved in this series of papers)  $M_{2n}^{\#} \equiv_m 0^{(2n+1){\#}}$.
\item ($\Delta^1_{2n+3}$-coding of ordinals below $u^{(2n+1)}_{E(2n+1)}$) $\WO^{(2n+1)}_{E(2n+1)}$ is the set of level-($2n+1$) sharp codes for ordinals in $u^{(2n+1)}_{E(2n+1)}$. The comparison of level-($2n+1$) sharp codes is $\Delta^1_{2n+3}$. 
\end{enumerate}
$0^{(2n+1)\#}$ is the theory of $L[T_{2n+1}]$ with level-($2n+1$) indiscernibles. 
The structure of the level-($2n+1$) indiscernibles is more complicated than their order, as opposed to the order indiscernibles for $L$. The level-($2n+1$) indiscernibles form a tree structure, and the type realized in $L[T_{2n+1}]$ by finitely many of them depends only on the finite tree structure that relates them. This tree structure resembles the structure of measures (under AD) witnessing the homogeneity of $S_{2n+1}$, a tree on $\omega \times \bolddelta{3}$ that projects to the good universal $\Pi^1_{2n+1}$ set. 
We give a purely syntactical definition of $0^{(2n+1)\#}$ as the unique iterable, remarkable, level $\leq 2n$ correct level-($2n+1$) EM blueprint. This is the higher level analog of $0^{\#}$ as the unique wellfounded remarkable EM blueprint. The ``iterability'' part takes the form $\forall^{\mathbb{R}} (\Pi^1_{2n+1} \to \Pi^1_{2n+1})$, making the complexity of the whole definition $\Pi^1_{2n+2}$.
The ordinal $u^{(2n+1)}_{E(2n+1)}$ is a level-$(2n+1)$ uniform indiscernible. It will be discussed in the next paragraph. When $n=0$, $u^{(1)}_{E(1)} = u_{\omega}$.

The structure tied to arbitrary $\Pi^1_{2n+1}$ sets are defined. 
By induction, we have level-($2n-1$) indiscernibles for $L_{\bolddelta{2n-1}}[T_{2n-1}]$ and the real $0^{(2n-1){\#}}$. Based on the EM blueprint formulation of $0^{(2n-1) \#}$, we define the level-$2n$ Martin-Solovay tree $T_{2n}$. It is the higher level analog of $T_2$. 
This is the most canonical tree that enables the correct generalization of the structural theory related to $\Pi^1_{2n+1}$ sets. The structures in our attention are $L_{\kappa_{2n+1}}[T_{2n}]$, the least admissible set over $T_{2n}$, and its relativizations:
\begin{enumerate}
\item (Model theoretic representation of $\Pi^1_{2n+1}$, to be proved in this series of papers) $A  \subseteq \mathbb{R}$ is $\Pi^1_{2n+1}$ iff there is a $\Sigma_1$ formula $\varphi$ such that $x \in A \eqiv L_{\kappa_{2n+1}^x}[T_{2n},x] \models \varphi(T_{2n},x)$. 
\item (Mouse set, \cite{steel_projective_wo_1995}) $x \in \mathbb{R} \cap L_{\kappa_{2n+1}}[T_{2n}]$ iff $x$ is $\Delta^1_{2n+1}$ in a countable ordinal iff $x \in M_{2n-1}^{\#}$. 
\item (The transcendental real over $L_{\kappa_{2n+1}}[T_{2n}] $, to be proved in this series of papers) $M_{2n-1}^{\#} \equiv_m 0^{(2n)\#}$.
\item ($\Pi^1_{2n+1}$-coding of ordinals below $\bolddelta{2n+1}$) $\WO^{(2n+1)}$ is the set of reals that naturally code a wellordering of $u^{(2n-1)}_{E(2n-1)}$. $\WO^{(2n+1)}$ is $\Pi^1_{2n+1}$. 
\item 
  The uncountable $\boldDelta{2n+1}$ wellordered cardinals are $(u_k:1 \leq k\leq \omega)$, $(u^{(3)}_{\xi} : 1 \leq \xi \leq E(3))$, $\dots$, $(u^{(2n-1)}_{\xi} : 1 \leq \xi \leq E(2n-1))$, where $E(0) = 1$, $E(i+1) = \omega^{E(i)}$ via ordinal exponentiation. 
\end{enumerate}
The equivalence $M_{2n-1}^{\#} \equiv_m 0^{(2n)\#}$ will be based on the equality of pointclasses: $\game^{2n}(\lessthanshort{\omega^2}\text{-}\Pi^1_1) = \lessthanshort{u^{(2n-1)}_{E(2n-1)}}\text{-}\Pi^1_{2n+1}$.
$\set{u^{(2n-1)}_{\xi}}{1 \leq \xi \leq E(2n-1)}$ is the set of level-$(2n-1)$ uniform indiscernibles. It is the higher level analog of the first $\omega+1$ uniform indiscernibles $\set{u_n}{ 1 \leq n \leq \omega}$.  
Under full AD, the uncountable $\boldDelta{2n+1}$ wellordered cardinals enumerate all the uncountable cardinals below $\bolddelta{2n+1}$: $u_k = \aleph_k$ for $1 \leq k<\omega$, $u^{(2i+1)}_{\xi} = \aleph_{E(2i-1)+\xi}$ for $1 \leq \xi \leq E(2i+1)$. Assume AD for the moment. 
The equation $\bolddelta{2n+1} = \aleph_{E(2n-1)+1}$ is originally proved by Jackson in \cite{jackson_delta15,jackson_proj_ord}. Jackson shows that every successor cardinal in the interval $[\bolddelta{2n-1}, \aleph_{E(2n-1)})$ is the image of $\bolddelta{2n-1}$ via an ultrapower map induced by a measure on $\bolddelta{2n-1}$. 
 \cite{jk_desc} goes on to show that for a certain collection of measures $\mu$ on $\bolddelta{3}$, 
every description leads to a canonical function representing a cardinal modulo $\mu$.
 \cite{jk_desc,jackson_loewe_2013} compute the cofinality of the cardinals below $\bolddelta{\omega}$.  In this series of papers, we demonstrate the greater importance of the set theoretic \emph{structures} tied to these cardinals over their \emph{order type}. It is the inner model $L[T_{2n-1}]$ and its images via different ultrapower maps that give birth to the uncanny order type $E(2n-1)+1$. 
The level-$(2n-1)$ uniform indiscernibles $(u^{(2n-1)}_{\xi} : 1 \leq \xi \leq E(2n-1))$ are defined under this circumstance. Recall that the first $\omega$ uniform indiscernibles can be generated by $j^{\mu^n}(L_{\omega_1}) = L_{u_{n+1}}$, where $\mu^n$ is the $n$-fold product of the club measure on $\omega_1$; if $1 \leq i \leq {n+1}$, then $u_i$ is represented modulo $\mu^n$ by a projection map; every ordinal below $u_{n+1}$ is in the Skolem hull of $\se{x , u_1,\dots,u_n}$ over $L[x]$ for some $x \in \mathbb{R}$. This scenario is generalized by the level-$(2n-1)$ uniform indiscernibles. As a by-product, we simplify the arguments in \cite{jackson_delta15,jackson_proj_ord,jk_desc,jackson_loewe_2013},  show in full generality that any description represents a cardinal modulo any measure on $\bolddelta{2n-1}$, and establish the effective version of the cofinality computations. 

The whole argument is inductive. Assume AD for simplicity. In the computation of $\bolddelta{2n+1}$ in \cite{jackson_delta15,jackson_proj_ord}, the strong partition property of $\bolddelta{2n+1}$ is proved and used inductively in the process. 
Our argument reproves the strong partition property of $\bolddelta{2n+1}$ using the EM blueprint formulation of $0^{(2n+1)\#}$. 
The definition of $0^{(2n+1)\#}$ is based on the analysis of level-($2n+1$) indiscernibles, whose existence depend on the homogeneous Suslin representations of $\boldpi{2n}$ sets, which in turn follow from the strong partition property of $\bolddelta{2n-1}$. Just as the main ideas of the computation of $\bolddelta{2n+1}$ boil down to that of $\bolddelta{5}$, all the non-trivialities are contained in the first few levels. The general inductive step is merely a technical manifestation. 

A deeper insight into the interaction between inner model theory and Jackson's computation of projective ordinals in \cite{jackson_delta15,jackson_proj_ord} is the concrete information on the direct system of countable iterates of $M_{2n}^{\#}$. Put $n=1$ and assume AD for simplicity sake. Put $M_{2,\infty}^{-} = L_{\bolddelta{3}}[T_3]$. We define
$(c^{(3)}_{\xi}: \xi < \bolddelta{3})$, a continuous sequence in $\bolddelta{3}$ that generates the set of level-3 indiscernibles for $M_{2,\infty}^{-}$. 
Each $M_{2,\infty}^{-}|c^{(3)}_{\xi}$ is the direct limit of $\Pi^1_3$-iterable mice whose Dodd-Jensen order is $c^{(3)}_{\xi}$. 
We define an alternative direct limit system indexed by ordinals in $u_{\omega}$ which is dense in the system leading to $M_{2,\infty}^{-}|c^{(3)}_{\xi}$. 
The advantage of this dense subsystem is that it leads to a good coding of $M_{2,\infty}^{-}| c^{(3)}_{\xi}$ by a subset of $u_{\omega}$. 
The indexing ordinals are represented by wellorderings on $\omega_1$ of order type $\omega_1+1$ modulo measures on $\omega_1$  arising from the strong partition property on $\omega_1$. 
Any order-preserving injection between two such wellorderings corresponds to an elementary embedding between models of this new direct limit.  This injection is an isomorphism just in case its corresponding elementary embedding is essentially an iteration map, i.e., commutes with the comparison maps. The new direct system is then guided by isomorphisms between wellorderings on $\omega_1$ of order type $\omega_1+1$. In this regard, the Dodd-Jensen property of mice corresponds to the simple fact that if $f$ is an order preserving map between ordinals, then $\alpha \leq f(\alpha)$ pointwise. This observation is not surprising at all, as the Dodd-Jensen property on iterates of $0^{\#}$ \emph{is} originated from this simple fact.  
This viewpoint might be a prelude to understanding the combinatorial nature of iteration trees on mice with finitely many Woodin cardinals.

A key step in computing the upper bound of $\bolddelta{5}$ in \cite{jackson_delta15} is the (level-3) Martin tree. For the reader familiar with the Martin tree and the purely descriptive set theoretical proof of the Kechris-Martin theorem in \cite[Section 4.4]{jackson_handbook}, the level-1 version of the Martin tree is essentially an analysis of partially iterable sharps. The level-3 Martin tree is therefore replaced by an analysis of partially iterable level-3 sharps in this series of papers. The aforementioned new direct limit system indexed by ordinals in $u_{\omega}$ applies to any partially iterable mouse, so that its possibly illfounded direct limit is naturally coded by a subset of $u_{\omega}$.  This is yet another incidence that descriptive set theory and inner model theory are two sides of the same coin.

Apart from inner model theory, the pure computational component in \cite{jackson_delta15,jackson_proj_ord} has a major simplification. Under AD, a successor cardinal in the interval $[\bolddelta{3}, \aleph_{\omega^{\omega^{\omega}}})$ is represented by a measure $\mu$ on $\bolddelta{3}$ and a description. The original definition of description involves a finite iteration of ultrapowers on $u_{\omega}$. The ``finite iteration of ultrapowers'' part is now simplified to a single ultrapower. The analysis will be in the third paper of this series. 

As $L_{\kappa_{2n+1}}[T_{2n}]$ is the correct structure tied to $\Pi^1_{2n+1}$ sets, it is natural to investigate its intrinsic structure. However, little is known at this very step. The closest result is on the full model $L[T_{2n}]$. The uniqueness of $L[T_{2n}]$ is proved by Hjorth \cite{hjorth_LT2} for $n=1$ and Atmai \cite{atmai_thesis} for general $n$. Here, uniqueness means that if $T'$ is the tree of another $\Delta^1_{2n+1}$-scale on a good universal $\Pi^1_{2n}$ set, then $L[T_{2n}] = L[T']$. Atmai-Sargsyan \cite{atmai_thesis} goes on to show that the full model $L[T_{2n}]$ is just $L[M_{2n-1,\infty}^{\#}]$, where $M_{2n-1,\infty}^{\#}$ is the direct limit of all the countable iterates of $M_{2n-1}^{\#}$. A test question that separates $L_{\kappa_3}[T_2]$ from $L[T_2]$ is the inner model theoretic characterization of $C_3$, the largest countable $\Pi^1_3$ set: if $x \in C_3$, must $x$ be $\Delta^1_3$-equivalent to a master code in $M_2$? (cf. \cite[p.13]{cabal1_intro_steel}) 
We will set up a good preparation for tackling this problem. 

Looking higher up, the technique in this series of papers should generalize to arbitrary projective-like pointclasses in $L(\mathbb{R})$ and beyond. The descriptive set theory counterpart of larger mice should enhance our understanding of large cardinals. Typical open questions in the higher level include: 
\begin{enumerate}
\item (cf. \cite[Problem 19]{aim_2004}) Assume AD. Let $\boldsymbol{\Gamma}$ be a $\boldpi{1}$-like scaled pointclass (i.e., closed under $\forall^{\mathbb{R}}$, continuous preimages and non-self-dual) and Let $\boldsymbol{\Delta} = \boldsymbol{\Gamma} \cap {\boldsymbol{\Gamma}}^{\smallsmile}$, $\boldsymbol{\delta} = \sup \set{\sharpcode{<}}{< \text{ is a prewellordering in } \boldsymbol{\Delta} }$. Is $\boldsymbol{\Gamma}$ closed under unions of length $<\boldsymbol{\delta}$?
\item Assume AD. Let $\boldsymbol{\Gamma}$, $\boldsymbol{\delta}$ be as in 1. Must $\boldsymbol{\delta}$ have the strong partition property?
\item Assume AD. If $\kappa \leq \lambda$ are cardinals, must $ \cf(\kappa^{++}) \leq \cf(\lambda^{++})$?
\end{enumerate}

We now switch to some immediate applications on the theory of higher level indiscernibles. Our belief is that any result in set theory that involves sharp and Silver indiscernibles should generalize to arbitrary projective levels. 

Woodin \cite{SUW} proves that boldface $\boldpi{2n+1}$-determinacy is equivalent to ``for any real $x$, there is an $(\omega,\omega_1)$-iterable $M_{2n}^{\#}(x)$''. The lightface scenario is tricky however.
Woodin (in unpublished work for odd $n$) and Neeman \cite{nee_opt_I,nee_opt_II} proves that the existence of an $\omega_1$-iterable $M_n^{\#}$ implies boldface $\boldpi{n}$-determinacy and lightface $\Pi^1_{n+1}$-determinacy. 

\begin{question}[{cf. \cite[\#9]{cabal_4_survey}}]
\label{question:det}
  Assume $\boldpi{n}$-determinacy and $\Pi^1_{n+1}$-determinacy. Must there exist an $\omega_1$-iterable $M_{n}^{\#}$? 
\end{question}
Note that the assumption of boldface $\boldpi{n}$-determinacy in Question~\ref{question:det} is necessary, as $\Delta^1_2$-determinacy alone is enough to imply that there is a model of OD-determinacy (Kechris-Solovay \cite{kec_sol_1985}). The cases $n \in \se{0,1}$ in Question~\ref{question:det} are solved positively by Harrington in \cite{harr_sharp_det} and by Woodin in \cite{hod_as_a_core_model}.  The proof of the $n=1$ case heavily relies on the theory of Silver indiscernibles for $L$. The theory of level-3 indiscernibles for $L_{\bolddelta{3}}[T_3]$ is thus involved in proving the general case when $n$ is odd. 

\begin{theorem}
  \label{thm:determinacy_to_mice}
  Assume $\boldpi{2n+1}$-determinacy and $\Pi^1_{2n+2}$-determinacy. Then there exists an $(\omega,\omega_1)$-iterable $M_{2n+1}^{\#}$. 
\end{theorem}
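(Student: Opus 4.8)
The plan is to argue by contradiction, running a core model construction over the mouse operator supplied by the boldface hypothesis and then refuting its correctness with the lightface hypothesis; the essential new ingredient, which replaces the Silver indiscernibles for $L$ used by Woodin in the $M_1^{\#}$ case, is the theory of level-$(2n+1)$ indiscernibles for $L[T_{2n+1}]$ developed in this series. First I would extract the mouse operator: by $\boldpi{2n+1}$-determinacy and Woodin's theorem, the map $x \mapsto M_{2n}^{\#}(x)$ is total and each value is $(\omega,\omega_1)$-iterable. This turns the operator into a usable building block, so that the direct limit $M_{2n,\infty}^{\#}$, the tree $T_{2n+1}$, the model $L[T_{2n+1}] = L[M_{2n,\infty}^{\#}\restrict\bolddelta{2n+1}]$, its level-$(2n+1)$ indiscernibles, and the real $0^{(2n+1)\#} \equiv_m M_{2n}^{\#}$ are all available, with their complexity pinned down by the pointclass analysis of this paper.

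Next I would set up the dichotomy. Suppose toward a contradiction that there is no $(\omega,\omega_1)$-iterable $M_{2n+1}^{\#}$. Then I run the $\kc$-construction relative to the operator $M_{2n}^{\#}(\cdot)$. Because no active level with $2n+1$ Woodin cardinals is ever reached, the construction stabilizes to a genuine core model $K$, closed under the operator and containing no iterable inner model with $2n+1$ Woodins. The crucial point here is that the full iterability of $K$ reduces to the $(\omega,\omega_1)$-iterability of the $M_{2n}^{\#}$ operator from the first step, so that no circular appeal to the very object we are trying to construct is required.

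The heart of the argument is correctness. Using the level-$(2n+1)$ indiscernible analysis — exactly as the Silver indiscernibles for $L$ govern the $n=1$ case — I would show that $K$ is $\Sigma^1_{2n+2}$-correct: its reals are precisely those that are $\Delta^1_{2n+2}$ in a countable ordinal, and $K$ carries a lightface $\Sigma^1_{2n+2}$-good wellordering of this set. This is where the tightly controlled complexity of the operator ($M_{2n}^{\#}\equiv_m 0^{(2n+1)\#}$) and the pointclass equality of Theorem~\ref{thm:pointclass_game_exchange} do the real work. Once this correctness is in hand, lightface $\Pi^1_{2n+2}$-determinacy becomes incompatible with such a wellordering: by the periodicity and scale-property consequences of the determinacy hypotheses at $\Sigma^1_{2n+2}$, together with a refutation of a definable wellordering of a ``large'' set of reals in the spirit of Kechris--Solovay, one converts the lightface $\Sigma^1_{2n+2}$-good wellordering of the $\Delta^1_{2n+2}$-in-a-countable-ordinal reals into a non-determined game of the relevant complexity. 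This contradiction forces $M_{2n+1}^{\#}$ to exist, and by construction it is $(\omega,\omega_1)$-iterable.

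I expect the main obstacle to be precisely this correctness step together with the non-circularity of iterability: one must verify both that $K$ genuinely captures the $\Sigma^1_{2n+2}$ pointclass and that its definable wellordering really does refute the lightface determinacy, all while keeping the iterability of $K$ grounded solely in the $M_{2n}^{\#}$ operator. It is exactly at this juncture that the level-$(2n+1)$ indiscernible theory of this paper is indispensable, since it is what calibrates the complexity of the operator and of $0^{(2n+1)\#}$ finely enough for the correctness and the final contradiction to go through.
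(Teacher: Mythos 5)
First, a point of comparison: the paper does not actually prove Theorem~\ref{thm:determinacy_to_mice}; it states explicitly that the proof ``will appear in further publications,'' offering only the hint that it generalizes Woodin's argument for the $n=1$ case in \cite{hod_as_a_core_model} (which ``heavily relies on the theory of Silver indiscernibles for $L$'') by substituting the level-$(2n+1)$ indiscernibles for $L_{\bolddelta{3}}[T_3]$ and its higher analogs. So there is no proof in the paper to measure your sketch against, and you should be aware that Woodin's $n=1$ proof does not run a $\kc$-construction: it uses the Kechris--Solovay transfer to get OD-determinacy inside $L[x]$ for a cone of $x$, and then shows that $\operatorname{HOD}^{L[x]}$ below $\omega_2^{L[x]}$ is a model with a Woodin cardinal, with the indiscernibles entering to certify iterability of the resulting structure. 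Your proposal is a genuinely different route, and it has two gaps serious enough that I do not think it goes through as described.

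The first gap is iterability of $K$. You assert that ``the full iterability of $K$ reduces to the $(\omega,\omega_1)$-iterability of the $M_{2n}^{\#}$ operator from the first step.'' This is precisely the hard point, not a reduction one gets for free: proving that the models of a $\kc$-construction relativized to an operator are iterable is the content of the iterability conjecture at this level, and in the projective core model induction it is exactly the step that forces one to abandon the naive $\kc$ approach in favor of the $\operatorname{HOD}^{L[x]}$-style argument. Nothing in the boldface hypothesis hands you countable-substructure realizability for the levels of the construction that go past $2n$ Woodins. The second gap is the final contradiction. You propose to derive a lightface $\Sigma^1_{2n+2}$-good wellordering of the reals of $K$ (the reals $\Delta^1_{2n+2}$ in a countable ordinal) and then contradict lightface $\Pi^1_{2n+2}$-determinacy in the spirit of Kechris--Solovay. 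But under the very determinacy hypotheses you are assuming, the set $C_{2n+2}$ of reals $\Delta^1_{2n+2}$ in a countable ordinal is a countable set carrying a good $\Sigma^1_{2n+2}$ wellordering; this is a theorem, not a contradiction. A Kechris--Solovay-type refutation needs a definable wellordering of \emph{all} the reals (or of a non-thin definable set), which would require $\mathbb{R}\subseteq K$ --- something your setup gives no reason to expect. As it stands, then, both the construction of $K$ and the punchline are missing essential ingredients, and the indiscernible theory you invoke is doing work only by name.
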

The proof of Theorem~\ref{thm:determinacy_to_mice} will appear in further publications. 
The case $n\geq 2$ even in Question~\ref{question:det} remains open.

Another application is the $\delta$-ordinal of intermediate pointclasses between $\boldpi{m}$ and $\boldDelta{m+1}$. 
If $\Gamma$ is a pointclass, $\delta(\Gamma)$ is the supremum of the lengths of $\Gamma$-prewellorderings on $\mathbb{R}$.
 $A \subseteq \mathbb{R}$ is $\Gamma_{m,n}(z)$ iff for some formula $\psi$ we have $x \in A \eqiv M_{m-1}[x,z] \models \psi (x,z, \aleph_1,\dots,\aleph_n)$. $A$ is  $\boldsymbol{\Gamma}_{m,n}$ iff $A $ is ${\Gamma}_{m,n}(z)$ for some real $z$. Hjorth \cite{hjorth_boundedness_lemma} proves that $\delta(\boldsymbol{\Gamma}_{1,n}) = u_{n+2}$ under $\boldDelta{2}$-determinacy. Sargsyan \cite{sarg_pwo_2013}
proves that under AD, $\sup_{n<\omega} \delta (\boldsymbol{\Gamma}_{2k+1,n}) $ is the cardinal predecessor of $\bolddelta{2k+3}$. The exact value of $\delta(\boldsymbol{\Gamma}_{2k+1,n})$ remains unknown. Based on the theory of higher level indiscernibles, we can define the pointclasses $\boldsymbol{\Lambda}_{2k+1,\xi}$ for $0<\xi \leq  E(2k+1)$. For the moment we need the notations in this series of papers. 
 $A \subseteq \mathbb{R}$ is $\Lambda_{3, \xi+1}(z)$ iff for some level-3 tree $R$ such that $\llbracket \emptyset \rrbracket_R = \widehat{\xi}$, for some $\mathcal{L}^{\underline{x},R}$-formula $\psi$ we have $x \in A \eqiv \gcode{\psi} \in (x,z)^{3\#}(R)$. When $\xi$ is a limit, $\Lambda_{3,\xi}(z) = \bigcup_{\eta < \xi}\Lambda_{3,\eta}(z)$. 
$A$ is $\boldsymbol{\Lambda}_{3,\xi}$ iff $A$ is $\Lambda_{3,\xi}(z)$ for some real $z$. 
 \begin{theorem}
   \label{thm:generalize_hjorth}
   Assume $\boldDelta{4}$-determinacy and  $0<\xi < \omega^{\omega^{\omega}}$. If $\xi$ is a successor ordinal, then $\delta(\boldsymbol{\Lambda}_{3,\xi}) = u^{(3)}_{\xi+1}$. If $\xi $ is a limit ordinal, then $\delta(\boldsymbol{\Lambda}_{3,\xi}) = u^{(3)}_{\xi}$. 
 \end{theorem}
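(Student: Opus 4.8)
\noindent
The plan is to prove the successor case $\delta(\boldsymbol{\Lambda}_{3,\xi}) = u^{(3)}_{\xi+1}$ by two inequalities and then deduce the limit case formally. Write $\xi = \eta+1$ in the successor case, so that the witnessing trees $R$ in the definition of $\boldsymbol{\Lambda}_{3,\xi}$ satisfy $\llbracket\emptyset\rrbracket_R = \widehat{\eta}$ and the level-3 sharps $(x,z)^{3\#}(R)$ involve level-3 indiscernibles only up to $u^{(3)}_\xi$. For the limit case I would argue purely formally: since $\boldsymbol{\Lambda}_{3,\xi} = \bigcup_{\zeta<\xi}\boldsymbol{\Lambda}_{3,\zeta}$, and since a single prewellordering lying in an increasing union of pointclasses lies in one member of the union, we get $\delta(\boldsymbol{\Lambda}_{3,\xi}) = \sup_{\zeta<\xi}\delta(\boldsymbol{\Lambda}_{3,\zeta})$. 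Running the successor case through the cofinally many successor ordinals $\zeta<\xi$ gives $\sup_{\zeta<\xi} u^{(3)}_{\zeta+1} = u^{(3)}_\xi$, using that $(u^{(3)}_\zeta)$ is continuous at the limit $\xi$. So everything reduces to the successor case, where the target is $u^{(3)}_{\xi+1} = (u^{(3)}_\xi)^{+}$, the cardinal successor of the Suslin cardinal $u^{(3)}_\xi$ attached to $\boldsymbol{\Lambda}_{3,\xi}$. This is the exact level-3 transcription of Hjorth's computation of $\delta(\boldsymbol{\Gamma}_{1,n}) = u_{n+2}$ in \cite{hjorth_boundedness_lemma}, under $n \leftrightarrow \xi-1$.

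\noindent
For the lower bound $\delta(\boldsymbol{\Lambda}_{3,\xi}) \geq u^{(3)}_{\xi+1}$ I would produce $\boldsymbol{\Lambda}_{3,\xi}$-prewellorderings cofinal in $(u^{(3)}_\xi)^{+}$. The level-3 sharp-code analysis of this series represents every ordinal below $u^{(3)}_\xi$ as a Skolem-term value $\tau^{L[T_3,x]}$ evaluated at level-3 indiscernibles for $M_{2,\infty}^{-} = L_{\bolddelta{3}}[T_3]$ below the $\eta$-th one, as $x$ ranges over the reals; the comparison of two such codes is decided inside the sharp $(\,\cdot\,,\,\cdot\,)^{3\#}(R)$ with $\llbracket\emptyset\rrbracket_R = \widehat{\eta}$, hence is $\boldsymbol{\Lambda}_{3,\xi}$. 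This already yields lengths cofinal in $u^{(3)}_\xi$. To climb from $u^{(3)}_\xi$ to its successor, I would invoke that $\boldsymbol{\Lambda}_{3,\xi}$ is a scaled pointclass with Suslin cardinal $u^{(3)}_\xi$ and code ordinals below $(u^{(3)}_\xi)^{+}$ by the tree of the scale on a universal $\boldsymbol{\Lambda}_{3,\xi}$-set, precisely as in the classical passage from a Suslin cardinal to its successor in the computation of $\delta(\boldsymbol{\Gamma}_{1,n})$.

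\noindent
The upper bound $\delta(\boldsymbol{\Lambda}_{3,\xi}) \leq u^{(3)}_{\xi+1}$ is the boundedness half and the principal obstacle. The structural input I would establish is that every $\boldsymbol{\Lambda}_{3,\xi}$-set is $u^{(3)}_\xi$-Suslin: the defining condition $\gcode{\psi} \in (x,z)^{3\#}(R)$ should be, via the homogeneous Suslin representation of $\boldpi{3}$-sets available under $\boldDelta{4}$-determinacy together with the game-pointclass reductions underlying Theorem~\ref{thm:pointclass_game_exchange}, uniformly reducible to a tree on $\omega \times u^{(3)}_\xi$ built from the level-3 indiscernible structure coded by $R$, with all the branching concentrated at $u^{(3)}_\xi$. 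Granting this, the Kunen--Martin theorem bounds the length of any wellfounded $\boldsymbol{\Lambda}_{3,\xi}$-relation by $(u^{(3)}_\xi)^{+}$, and the basic theory of level-3 uniform indiscernibles gives $(u^{(3)}_\xi)^{+} = u^{(3)}_{\xi+1}$ for successor $\xi$, since these indiscernibles are consecutive cardinals. The hard point is to pin the Suslin cardinal at exactly $u^{(3)}_\xi$ and no higher; concretely, one must show that an ordinal singled out uniformly from a real by a theory $(x,z)^{3\#}(R)$ whose principal indiscernible is $\widehat{\eta}$ cannot reach the next uniform indiscernible $u^{(3)}_{\xi+1}$. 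This transcendence of $u^{(3)}_{\xi+1}$ over everything generated below $u^{(3)}_\xi$ is the genuinely new ingredient, and it is exactly the place where the full analysis of level-3 indiscernibles for $L_{\bolddelta{3}}[T_3]$ developed in this series is needed.
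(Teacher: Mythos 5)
The first thing to say is that this paper contains no proof of Theorem~\ref{thm:generalize_hjorth} to compare against: the text states explicitly that ``the proof of Theorem~\ref{thm:generalize_hjorth} and its higher level analog will appear in further publications,'' and the objects appearing in the statement and in your argument --- the level-3 uniform indiscernibles $u^{(3)}_{\xi}$, the level-3 trees $R$ with $\llbracket \emptyset \rrbracket_R = \widehat{\xi}$, and the level-3 sharps $(x,z)^{3\#}(R)$ --- are only constructed in the second and third papers of the series. So your proposal can only be judged as a strategy sketch, not checked against anything here.

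As a sketch, the skeleton is the expected one (reduce the limit case to the successor case by continuity of $\xi \mapsto u^{(3)}_{\xi}$ and the fact that a single prewellordering in an increasing union lies in one member; then prove two inequalities on the Hjorth template for $\delta(\boldsymbol{\Gamma}_{1,n}) = u_{n+2}$), but there is a genuine gap: the two claims that carry all the weight are asserted rather than proved, and they are essentially equivalent to the theorem itself. Concretely: (a) for the upper bound you need that every $\boldsymbol{\Lambda}_{3,\xi}$ set is $u^{(3)}_{\xi}$-Suslin \emph{with the Suslin cardinal pinned at exactly $u^{(3)}_{\xi}$}, which you acknowledge as ``the genuinely new ingredient'' and for which you offer only the hope that the homogeneity analysis will deliver it; (b) for the lower bound you need prewellorderings of length cofinal in $u^{(3)}_{\xi+1}$, not merely in $u^{(3)}_{\xi}$, and the ``climb to the successor via the tree of a scale'' step is exactly the level-3 transcendence statement that requires the full indiscernible analysis. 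Moreover, both halves use $u^{(3)}_{\xi+1} = (u^{(3)}_{\xi})^{+}$; the paper only records the identification of the $u^{(3)}_{\zeta}$ with consecutive cardinals \emph{under full AD}, whereas the theorem is stated from $\boldDelta{4}$-determinacy, so this step also needs an argument (the level-1 analogue, that $u_{n+1}$ is the cardinal successor of $u_n$ from $\boldDelta{2}$-determinacy alone, is itself a nontrivial theorem). In short, the proposal correctly identifies where the difficulty lies but does not close any of it, so it cannot be counted as a proof.
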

The proof of Theorem~\ref{thm:generalize_hjorth} and its higher level analog will appear in further publications. The question on the value of $\delta(\boldsymbol{\Gamma}_{3,n})$ is then reduced to the relative position of $\boldsymbol{\Gamma}_{3,n}$ in the hierarchy $(\boldsymbol{\Lambda}_{3,\xi} : 0 < \xi < \omega^{\omega^{\omega}})$. The results of this series of papers combined with Neeman \cite{nee_opt_I,nee_opt_II} yield the following estimate:
\begin{displaymath}
  \boldsymbol{\Lambda}_{3,\omega^{\omega^{n}}} \subseteq   \boldsymbol{\Gamma}_{3,n} \subseteq \boldsymbol{\Lambda}_{3,\omega^{\omega^{n+1}}+1}.
\end{displaymath}
We conjecture that $
  \boldsymbol{\Lambda}_{3,\omega^{\omega^{n+1}}} \subsetneq   \boldsymbol{\Gamma}_{3,n} \subsetneq \boldsymbol{\Lambda}_{3,\omega^{\omega^{n+1}}+1}$ and $\delta(\boldsymbol{\Gamma}_{3,n} ) = u^{(3)}_{\omega^{\omega^{n+1}}+1}$.

We try to make this series of papers as self-contained as possible. 
The reader is assumed to have some minimum background knowledge in descriptive set theory and inner model theory.
On the descriptive set theory side, we assume basic knowledge of determinacy, scale and its tree representation, homogeneous tree and its ultrapower representation, and at least the results of Moschovakis periodicity theorems. We will briefly recall them in Section~\ref{sec:KM}. Theorem~\ref{thm:BK-KM} by Becker-Kechris \cite{becker_kechris_1984} and Kechris-Martin \cite{Kechris_Martin_I,Kechris_Martin_II} will basically be treated as a black box. 
Knowing its proof would help, though not necessary. 
On the inner model theory side, we assume basic knowledge of mice and iteration trees in the region of finitely many Woodin cardinals, especially Theorem 6.10 in  \cite{steel-handbook}. The level-wise projective complexity associated to mice will be recalled in the second paper of this series. Steel's computation of $L[T_{2n+1}]$ in \cite{steel_hodlr_1995} will be treated as a black box. 
In particular, we require absolutely no knowledge of Jackson's analysis in \cite{jackson_delta15,jackson_proj_ord}. 

This series of papers is organized as follows. This paper is the first one of this series, establishing the many-one equivalence of $0^{2\#}$ and $M_1^{\#}$. The second paper will define $0^{3\#}$ as the theory of $L_{\bolddelta{3}}[T_3]$ with its ``level-3 indiscernibles'' and prove the many-one equivalence of $0^{3\#}$ and $M_2^{\#}$. The third paper will gives a $\Pi^1_4$-axiomatization of the real $0^{3\#}$, define the level-4 Martin-Solovay tree and prove the level-4 Kechris-Martin theorem, which prepares for the induction into the next level. The fourth paper will deal with the general inductive step in the projective hierarchy. 

We arrange these papers in such a way in order to minimize the background knowledge of the first and second papers. The first paper uses only sharps for reals and blackboxed Kechris-Martin. The second paper will introduce homogeneous trees with restricted complexity without too much technicalities that are enough to define $0^{3\#}$. The fine analysis associated to homogeneous trees, especially to the generalized Jackson's analysis, will only show up in the third paper. The fourth paper will be pretty much a routine generalization of the first three.


\section{Backgrounds and preliminaries}
\label{sec:KM}

\subsection{Basic descriptive set theory}
\label{sec:basic-descr-set}

Following the usual treatment in descriptive set theory, $\mathbb{R} = \omega^{\omega}$ is the Baire space, which is homeomorphic to the irrationals of the real line. If $A \subseteq \mathbb{R} \times X$, then $y \in \exists^{\mathbb{R}} A $ iff $\exists x \in \mathbb{R}~ (x,y) \in A$, $y \in \forall^{\mathbb{R}} A$ iff $\forall x \in \mathbb{R}~ (x,y) \in A$, $y \in \game A$ iff Player I has a winning strategy in the game with output $A_y \DEF \set{x}{(x,y) \in A}$. $\game^{n+1}A = \game (\game^n(A))$ when $A$ is a subset of an appropriate product space. 
A pointclass is a collection of subsets of Polish spaces (typically finite products of $\omega$ and $\mathbb{R}$).  
If $\Gamma$ is a pointclass, then $\exists^{\mathbb{R}} \Gamma = \set{\exists^{\mathbb{R}}A}{A \in \Gamma}$, and similarly for $\forall^{\mathbb{R}} \Gamma$, $\game \Gamma$, $\game^n \Gamma$.
$\boldsymbol{\Sigma}^0_1 = \boldsymbol{\Sigma}^1_0$ is the pointclass of open sets. $\Sigma^0_1 = \Sigma^1_0$ is the pointclass of effectively open sets. $\boldpi{n+1} = \forall^{\mathbb{R}} \boldsigma{n}$, $\boldsigma{n+1} = \exists^{\mathbb{R}} \boldpi{n}$, $\Pi^1_{n+1} = \forall^{\mathbb{R}} \Sigma^1_n$, $\Sigma^1_{n+1} = \exists^{\mathbb{R}} \Pi^1_n$.

If $\alpha$ is an ordinal and $A \subseteq \alpha \times X$, then
\begin{displaymath}
x \in \Diff A \eqiv  \exists i < \alpha~(i \text{ is odd }\wedge \forall j < i ( (j,x) \in A) \wedge (i,x) \notin A).
\end{displaymath}
If $\alpha < \omega_1^{CK}$ then $A \subseteq X$ is $ \alpha\text{-}\Pi^1_1$ iff $A = \Diff B$ for some $\Pi^1_1$ $B \subseteq \alpha \times X$. $A$ is $\lessthanshort{\alpha}\text{-}\Pi^1_1$ iff $A$ is $\beta\text{-}\Pi^1_1$ for some $\beta < \alpha$. Martin \cite{martin_largest} proves that $\Pi^1_1$-determinacy implies  $\lessthanshort{\omega^2}\text{-}\Pi^1_1$-determinacy. 

A tree on $ X$ is a subset of $X^{<\omega}$ closed under initial segments. If $T$ is a tree on $X$, $[T] $ is the set of infinite branches of $T$, i.e., $x \in T$ iff $\forall n ~ (x \res n) \in T$. If $T$ is a tree on $\lambda$, $\lambda$ is an ordinal, $[T] \neq \emptyset$, the leftmost branch is $x \in [T]$ such that for any $y \in [T]$, $(x(0),x(1),\dots) $ is lexicographically smaller than or equal to $(y(0),y(1),\dots)$. In addition, if  $x \in [T]$ and for any $y \in [T]$ we have $\forall n ~x(n) \leq y(n)$, then $x$ is the honest leftmost branch of $T$.
A tree $T$ on $\omega \times X$ is identified with a subset of  $\omega^{<\omega} \times X^{<\omega}$ consisting of $(s,t)$ so that $\lh(s) = \lh(t)$ and $((s(i),t(i)))_{i < \lh(s)} \in T$. If $T$ is a tree on $\omega \times X$, $[T]  \subseteq \omega^{<\omega} \times X^{<\omega}$ is the set of infinite branches of $T$.  $p[T]= \set{x}{\exists y ~ (x,y) \in [T]}$ is the projection of $T$. If $T$ is a tree on $\omega \times \lambda$ and $p[T] \neq \emptyset$, then $x$ is the leftmost real of $T$ iff $\exists \vec{\alpha} ~(x,\vec{\alpha})$ is the leftmost branch of $T$.

 \subsection{The Martin-Solovay tree}
\label{sec:effect-kunen-martin}

We assume $\boldpi{1}$-determinacy. This is equivalent to  $\forall x \in \mathbb{R} (x^{\#} \text{ exists})$ by Martin \cite{martin_pi11_det} and  Harrington \cite{harr_sharp_det}. 

$\gamma$ is a uniform indiscernible iff for every $x\in \mathbb{R}$, $\gamma$ is an $x$-indiscernible. The uniform indiscernibles form a club in $\ord$, which are listed $u_1,u_2,\ldots$ in the increasing order. In particular, $u_1 = \omega_1$ and $u_{\omega} = \sup_{n<\omega}u_n$.


The set $ \set{x^{\#}}{x \in \mathbb{R}}$ is $\Pi^1_2$. 
$\WO=\WO_1$ is the set of codes for countable ordinals.
For $1 \leq m < \omega$, $\WO_{m+1}$ is the set of $\corner{\gcode{\tau}, x^{\#}}$ where $\tau$ is an $(m+1)$-ary Skolem term for an ordinal in the language of set theory 
and $x \in \mathbb{R}$. 
The ordinal coded by  $w=\corner{\gcode{\tau}, x^{\#}} \in \WO_{m+1}$ is
\begin{displaymath}
|w| = \tau^{L[x]}(x,u_1,\ldots,u_m).
\end{displaymath}
Every ordinal in $u_{m+1}$ is of the form $\sharpcode{w}$ for some $w \in \WO_{m+1}$. For each $1 \leq m < \omega$, 
\begin{displaymath}
  \set{\tau^{L[x]}(x,u_m)}{ \corner{\gcode{\tau}, x^{\#}} \in \WO_2}
\end{displaymath}
is a cofinal subset of $u_{m+1}$. 
$\WO_{\omega} = \bigcup_{1 < m < \omega} \WO_m$.   
 $\WO$ is $\Pi^1_1$, and $\WO_{m+1}$ is $\Pi^1_2$ for $1 \leq m < \omega$. 
 If $\sigma : \se{1,\dots,m} \to \se{1,\dots,n}$ is order preserving, then define
 \begin{displaymath}
   j^{\sigma} ( \tau^{L[x]}(x, u_1,\dots,u_m) ) = \tau^{L[x]} (x, u_{\sigma(1)},\dots,u_{\sigma(m)}). 
 \end{displaymath}

If $\mathcal{X}$ is a Polish space, $A \subseteq \mathcal{X} \times u_{\omega}$ and $\Gamma$ is a pointclass, say that $A$ is in $\Gamma$ iff
\begin{displaymath}
  A^{*} = \set{ (x, w) }{ x \in \WO_{\omega} \wedge (x, \sharpcode{w} ) \in A}
\end{displaymath}
is in $\Gamma$. $\Gamma$ acting on product spaces are similarly defined. 

$T_2$, defined in   \cite{becker_kechris_1984,Kechris_Martin_II,martin_solovay_basis_1969}, refers to the Martin-Solovay tree on $\omega \times u_{\omega}$ that projects to $ \set{x^{\#}}{x \in \mathbb{R}}$, giving the scale
\begin{displaymath}
  \varphi_{\gcode{\tau}}(x ^{\#}) = \tau^{L[x]} (x,u_1,\ldots,u_{k_{\tau}}),
\end{displaymath}
where $\gcode{\tau}$ is the G\"{o}del number of $\tau$, $\tau$ is $k_{\tau}+1$-ary.
$T_2$ is a $\Delta^1_3$ subset of $(\omega \times u_{\omega})^{<\omega}$. From $T_2$ one can compute a tree $\widehat{T}_2$ on $\omega \times u_{\omega}$ that projects to a good universal $\boldpi{2}$ set. 

To conclude this section, we define the Martin-Solovay tree $T_2$ projecting to $\set{x^{\#}}{x \in \mathbb{R}}$ and its variant $\widehat{T}_2$ projecting to a good universal $\Pi^1_2$ set. This formulation of $T_2$ and $\widehat{T}_2$ will generalize to the higher levels in this series of papers. Let $T\subseteq 2^{<\omega}$ be a recursive tree such that $[T]$ is the set of remarkable EM blueprints over some real. Here we have fixed in advance an effective G\"{o}del coding of first order formulas in the language  $\se{\underline{\in},\underline{x},\underline{c_n}:n<\omega}$, so that an infinite string $x \in 2^{\omega}$ represents the theory $\set{\varphi}{x_{\gcode{\varphi}}=0}$. Fix an effective list of Skolem terms $(\tau_k)_{k<\omega}$ in the language of set theory, where $\tau_k$ is $f(k)+1$-ary, $f$ is effective. $T_2$ is defined as a tree on $2 \times u_{\omega}$ where
\begin{displaymath}
  ( s, (\alpha_0,\ldots,\alpha_{n-1}) ) \in T_2
\end{displaymath}
iff $s \in T$, $\lh(s) = n$, and for any $k,l < n$, for any order preserving $\sigma : \se{1,\ldots,f(k)} \to \se{1,\ldots,f(l)}$, 
  \begin{enumerate}
  \item if ``$ \tau_k (\underline{x},\underline{c_{\sigma(1)}}, \ldots,\underline{c_{\sigma(f(k))}}) = \tau_l(\underline{x},\underline{c_1},\ldots,\underline{c_{f(l)}})$'' is true in $s$, then $j^{\sigma}(\alpha_k) = \alpha_l$;  
\item if ``$ \tau_k (\underline{x},\underline{c_{\sigma(1)}}, \ldots,\underline{c_{\sigma(f(k))}}) < \tau_l(\underline{x},\underline{c_1},\ldots,\underline{c_{f(l)}})$'' is true in $s$, then $j^{\sigma}(\alpha_k) < \alpha_l$;
  \end{enumerate}
In essence, the second coordinate of $T_2$ attempts to  verify the wellfoundedness of the EM blueprint coded in the first coordinate. From $T_2$ we compute $\widehat{T}_2$,   a tree on $\omega \times(\omega \times  u_{\omega})$ that projects to a good universal $\Pi^1_2$ set. By Shoenfield absoluteness, if $\varphi(v)$ is a $\Pi^1_2$ formula, effectively from $\gcode{\varphi}$ we can compute a unary Skolem term  $\tau_{\gcode{\varphi}}$ such that $\tau_{\gcode{\varphi}}^{L[x]} (x)= 0$ iff $\varphi (x)$ holds. Define $(\gcode{\varphi} \concat (v), (s, \vec{\alpha})) \in \widehat{T}_2$ iff $(s, \vec{\alpha}) \in T_2$ and
\begin{enumerate}
\item if ``$\underline{x}(m)=n$'' is true in $s$, then $v(m)=n$;
\item  ``$\tau_{\gcode{\varphi}} (\underline{x})\neq 0$'' is not true in $s$.
\end{enumerate}
So $p[\widehat{T}_2] = \set{\gcode{\varphi} \concat (x)}{ \varphi(x)}$.

\subsection{Q-theory}
\label{sec:q-theory}

From now on until the end of this paper, we assume $\boldDelta{2}$-determinacy. By Kechris-Woodin \cite{KW_det_transfer},  $\boldgameclass{}$-determinacy follows. By Neeman \cite{nee_opt_I,nee_opt_II} and Woodin \cite{hod_as_a_core_model,woodin-handbook}, this is also equivalent to 
``for every $x\in \mathbb{R}$, there is an $(\omega, \omega_1)$-iterable $M_1^{\#}(x)$''.

For $x\in \mathbb{R}$, $\admistwo{x}$ is the minimum admissible set containing $(T_2,x)$. $\kappa_3^x$ is the higher level analog of $\omega_1^x$, the least $x$-admissible. 
The fact that the $\widehat{T}_2$ projects to a good universal $\Pi^1_2$ set implies for every $\Pi^1_3$ set of reals $A$, there is a $\Sigma_1$-formula $\varphi$ such that $x \in A$ iff ${\admistwo{x}\models\varphi(T_2,x) }$; $\varphi$ can be effectively computed from the definition of $A$.  Becker-Kechris in  \cite{becker_kechris_1984} strengthens this fact by allowing a parameter in $u_{\omega}$. The converse direction is shown by Kechris-Martin in \cite{Kechris_Martin_I,Kechris_Martin_II}. The back-and-forth conversion is concluded in \cite{becker_kechris_1984}. 
\begin{theorem}[Becker-Kechris-Martin]\label{thm:BK-KM}
  Assume $\boldDelta{2}$-determinacy. Then for each $A \subseteq u_{\omega} \times \mathbb{R}$, the following are equivalent.
  \begin{enumerate}
  \item $A$ is $\Pi^1_3$.
  \item There is a $\Sigma_1$ formula $\varphi$ such that $(\alpha,x) \in A$ iff $\admistwo{x}\models \varphi(T_2,\alpha,x)$.
  \end{enumerate}
\end{theorem}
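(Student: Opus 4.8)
The plan is to prove the two implications separately, guided throughout by the level-$0$ template: Theorem~\ref{thm:BK-KM} is the exact analogue, one projective level higher, of the Spector--Gandy characterization that $A$ is $\Pi^1_1(x)$ iff $A$ is $\Sigma_1$ over $L_{\omega_1^x}[x]$, now with $\omega_1^x$, $\Pi^1_1$ and recursive trees on $\omega$ replaced by $\kappa_3^x$, $\Pi^1_3$ and the Martin--Solovay tree $T_2$. The implication $(1)\Rightarrow(2)$ is a wellfoundedness argument powered by $\widehat{T}_2$ and is the Becker--Kechris half; the real substance is $(2)\Rightarrow(1)$, which is the Kechris--Martin half.

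For $(1)\Rightarrow(2)$ I would argue as follows. Since $A$ is $\Pi^1_3$, its complement is $\Sigma^1_3=\exists^{\mathbb{R}}\Pi^1_2$. Because $\widehat{T}_2$ projects to a good universal $\Pi^1_2$ set, a $\Pi^1_2$ matrix $D(\alpha,x,y)$ holds iff the corresponding section of $\widehat{T}_2$ is illfounded; absorbing the outer real quantifier $\exists y$ into the first coordinate of the tree then exhibits the complement of $A$ as a projection $p[S]$ of a single tree $S$ on $\omega\times u_\omega$. Here $S$ is obtained from $\widehat{T}_2$, from $x$, and from a $\WO_\omega$-code $w$ of $\alpha$ by a $\Delta_0$-recursive substitution, so that $(w,x)\mapsto S_{w,x}$ is $\Delta_1$ over $\admistwo{x}$. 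Since $u_\omega<\kappa_3^x$, the set $(\omega\times u_\omega)^{<\omega}$ belongs to $\admistwo{x}$, and $\Delta_1$-separation then places $S_{w,x}$ in $\admistwo{x}$. Finally $(\alpha,x)\in A$ iff $S_{w,x}$ is wellfounded iff $S_{w,x}$ carries a rank function, and by admissibility of $\kappa_3^x$ a wellfounded element of $\admistwo{x}$ has rank below $\kappa_3^x$, so such a rank function actually exists in $\admistwo{x}$; ``there is a rank function'' is $\Sigma_1$, which is the desired $\varphi$. The only delicate point is the uniformity of the passage from the code $w$ to the ordinal $\alpha$ inside the model, which is precisely the Becker--Kechris refinement that permits the $u_\omega$-parameter.

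For $(2)\Rightarrow(1)$ I would reflect $\admistwo{x}\models\varphi(T_2,\alpha,x)$ to $\exists\,\xi<\kappa_3^x\,(L_\xi[T_2,x]\models\varphi(T_2,\alpha,x))$ and then realize this as $\Pi^1_3$ through the level-$3$ Spector--Gandy mechanism. Since such a level is uncountable, I would first apply L\"owenheim--Skolem to witness the $\Sigma_1$ fact inside a countable elementary substructure $H\prec L_\xi[T_2,x]$, which can be coded by a real $c$. Three ingredients are then needed. First, a ``coded satisfaction is $\Delta^1_3$'' lemma: for such a code $c$, the relation ``the structure coded by $c$ satisfies $\psi$'' is $\Delta^1_3$ in $c$, $x$ and $T_2$, resting on $T_2$ being $\Delta^1_3$ together with the $\WO_\omega$-coding that turns every question about the ordinals of $H$ below $u_\omega$, and hence about $T_2$, into a $\Delta^1_3$ question. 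Second, the identification of $\kappa_3^x$ as the least $(T_2,x)$-admissible ordinal, equivalently the supremum of the ordinals possessing a wellfounded code $\Delta^1_3$ in $(T_2,x)$; this yields the boundedness principle guaranteeing that a true $\Sigma_1$ fact is witnessed by an $H$ with an honestly wellfounded, correctly embedded code. Third, a level-$3$ Gandy basis theorem, allowing the existential quantifier over the witnessing code to be absorbed without pushing the complexity past $\Pi^1_3$.

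The main obstacle is exactly this final collapse in $(2)\Rightarrow(1)$. Assembling the first two ingredients expresses $\admistwo{x}\models\varphi$ as $\exists\,c\,(c\ \text{codes an honest}\ H\ \text{with}\ H\models\varphi)$, where honesty requires the coded structure to be wellfounded and correctly embedded. The wellfoundedness clause is $\Pi^1_3$ --- it is essentially membership in $\WO^{(3)}$ --- while the satisfaction clause is only $\Delta^1_3$, so the naive bound on the whole expression is merely $\Sigma^1_4$. Bringing it down to $\Pi^1_3$ is the heart of Kechris--Martin: one must find a witness $c$ whose associated admissible ordinal does not exceed $\kappa_3^x$, so that the honesty check can be performed inside $\Pi^1_3$ rather than verified externally. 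This uniform bounding/basis step is where the scale property of $\Pi^1_3$ (third periodicity) and the homogeneity of $T_2$ enter, and it is precisely the content the paper quarantines as a black box; I would expect it to absorb essentially all of the difficulty.
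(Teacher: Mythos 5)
The first thing to say is that the paper does not prove this theorem at all: it is stated with the attribution Becker--Kechris--Martin, and the introduction announces that it ``will basically be treated as a black box,'' with the two directions credited to \cite{becker_kechris_1984} and \cite{Kechris_Martin_I,Kechris_Martin_II} respectively. So there is no in-paper proof to compare against, and your proposal must be judged as a free-standing argument. Your outline of $(1)\Rightarrow(2)$ is essentially the correct Becker--Kechris argument: pass to the $\Sigma^1_3$ complement, absorb the real quantifier into $\widehat{T}_2$ to obtain a tree $S_{\alpha,x}$ on $\omega\times u_{\omega}$ lying in $\admistwo{x}$, and use admissibility to find the rank function inside the model. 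The point you flag but do not discharge --- that the passage from a $\WO_{\omega}$-code $w$ to the ordinal $\alpha$ must be made code-independent, so that $S_{\alpha,x}$ is definable over $\admistwo{x}$ from the ordinal $\alpha$ and $T_2$ alone --- is exactly the Becker--Kechris refinement and deserves more than a sentence, though it is a manageable use of the $j^{\sigma}$ maps and the $\Delta^1_3$-ness of sharp-code comparison.

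The genuine gap is in $(2)\Rightarrow(1)$, and you say so yourself: after setting up the countable-hull coding you observe that the naive complexity of ``there exists an honest code $c$ with $H_c\models\varphi$'' is $\Sigma^1_4$, because honesty includes a wellfoundedness clause that is $\Pi^1_3$-complete, and you then declare that collapsing this to $\Pi^1_3$ is ``the heart of Kechris--Martin'' and ``precisely the content the paper quarantines as a black box.'' That is an accurate map of where the difficulty lives, but it is not a proof: your third ``ingredient,'' the level-3 Gandy basis theorem, is essentially a restatement of the theorem to be proved, so the argument is circular at the decisive moment. The actual proof, in any of its incarnations --- the original $Q$-theory reflection argument, the Martin-tree proof in \cite{jackson_handbook} which this paper itself describes as an analysis of partially iterable sharps, or the inner-model proof via $M_1^{\#}$ --- requires a concrete boundedness mechanism showing that a true $\Sigma_1$ fact over $\admistwo{x}$ is witnessed by an object (the leftmost branch of $\widehat{T}_2$, a partially iterable sharp) whose honesty can be certified within $\Pi^1_3$ rather than asserted externally. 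Nothing in your proposal supplies that mechanism, so the hard direction remains entirely open.
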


The conversions between the $\Pi^1_3$ definition of $A$ and the $\Sigma_1$-formula $\varphi$ are effective.

\section{The equivalence of $x^{2\#}$ and $M_1^{\#}(x)$}
\label{sec:definition-x-2-sharp}

 \begin{definition}
   \label{def:diff-Pi-1-3}
Suppose $\mathcal{X} =  \omega^k \times \mathbb{R}^l$ is a product space. Suppose $x$ is a real and $\beta\leq u_{\omega}$. A subset $A\subseteq \mathcal{X}$ is  $\beta\textnormal{-}\Pi^1_3(x)$  iff there is a $\Pi^1_3(x)$ set $B \subseteq u_{\omega} \times \mathcal{X}  $ such that $A = \Diff B$.
$A$ is $\beta\textnormal{-}\Pi^1_3$ iff $A$ is $\beta\textnormal{-}\Pi^1_3(0)$. $A$ is $\beta\textnormal{-}\boldpi{3}$ iff $A$ is $\beta\textnormal{-}\Pi^1_3(x)$ for some real $x$.
 \end{definition}

By Theorem~\ref{thm:BK-KM}, when $\beta$ is a limit ordinal,  $A\subseteq \mathcal{X}$ is  $\beta\textnormal{-}\Pi^1_3(x)$  iff there is a pair of  $\Sigma_1$-formulas $(\varphi,\psi)$ such that
\begin{displaymath}
 (\vec{n},\vec{y}) =  (n_1,\ldots,n_k,y_1,\ldots,y_l)\in A 
\end{displaymath}
iff
\begin{displaymath}
\admistwo{x,\vec{y}} \models ~\exists \alpha<\beta (\forall \eta < \alpha~ \varphi(\eta,\vec{n},\vec{y},T_2,x) \wedge \neg \psi(\alpha,\vec{n},\vec{y},T_2,x)).
\end{displaymath}


\begin{lemma}
  \label{lem:diff-Pi-1-3-to-game}
Assume $\boldDelta{2}$-determinacy. Suppose $ n,m$ are positive integers. If $A$ is $(u_n)^m\textnormal{-}\Pi^1_3(x)$, then $A$ is $\game^2(\omega n \textnormal{-} \Pi^1_1(x))$. 
\end{lemma}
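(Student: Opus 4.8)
The plan is to pass through the admissible-structure reformulation of difference-$\Pi^1_3$ sets and then fold the bounded ordinal quantifier into a two-layer game whose payoff is a finite difference of $\Pi^1_1$ sets.

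First I would apply the reformulation recorded after Definition~\ref{def:diff-Pi-1-3}: as $(u_n)^m$ is a limit ordinal, fixing $B$ a $\Pi^1_3(x)$ set with $A=\Diff B$, membership $z\in A$ is equivalent to saying that the least $\alpha<(u_n)^m$ with $z\notin B_{\alpha}$ exists and has odd finite part, and by Theorem~\ref{thm:BK-KM} each slice condition $z\in B_{\alpha}$ is uniformly $\Sigma_1$ over $\admistwo{x,z}$ with the ordinal $\alpha$ as a parameter. I would then fix the sharp-code naming of ordinals below $u_{\omega}$: an ordinal below $u_n$ is named by $\corner{\gcode{\tau},y^{\#}}$ with value $\tau^{L[y]}(y,u_1,\dots,u_{n-1})$, and an ordinal below $(u_n)^m$ by its base-$u_n$ Cantor normal form, i.e.\ a tuple of $m$ such codes compared most-significant-digit first. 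The two facts I would lean on are that the comparison of sharp codes is $\Delta^1_3$ and that the $\Sigma_1$-over-$\admistwo{x,z}$ conditions, with ordinal parameters supplied by codes, are themselves captured by the equality $\game^2\Pi^1_1=\Pi^1_3$.

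Next I would construct the game. Player~I is the player claiming $z\in A$; the outer $\game$ and inner $\game$ are spent on two tasks running in parallel. The first task lets the players bid and challenge sharp codes --- I bids a code for the candidate least drop-out ordinal $\alpha$, and the opponent may challenge with any $\eta<\alpha$ and assert $z\notin B_{\eta}$, which I must then refute; this absorbs both the existential witness and the $\forall\eta<\alpha$ clause into the moves of the game. The second task runs the unfolded $\game^2\Pi^1_1$ verification of the $\Pi^1_3$-predicate $B$ at the bid ordinals. The design principle is that all transfinite information in the bid ordinals is carried by the moves: well-foundedness of a bid code is enforced by letting the opponent challenge, so a player who bids an ill-founded pseudo-ordinal simply loses. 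What survives into the payoff is purely finite, combinatorial residue, namely the parity of the finite part of $\alpha$ together with the level-by-level data needed to adjudicate the base-$u_n$ lexicographic comparison and the descent through $u_1,\dots,u_n$. I would read off that this residue is a difference of $\Pi^1_1$ sets, and that its length is $\omega n$, independent of $m$.

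The hard part will be exactly this last accounting --- proving the payoff rank is $\omega n$ and no larger. The two sensitive points are, first, that the power $m$ must be swallowed by the game rather than inflate the payoff, which I expect to arrange by having the players negotiate the $m$ Cantor digits one at a time inside the game, so that lexicographic comparison across digits costs moves but no extra difference rank; and second, that descent through the $n$ uniform indiscernibles contributes exactly one $\omega$-block per level, the parity of the finite part supplying the final block. Pinning down the per-level constant is where the indiscernibility of the $u_k$ and the fine structure of sharp-code comparison below $u_n$ enter, and I would isolate it as a separate sublemma computing the difference-hierarchy complexity of comparing sharp codes below $u_n$. Granting that sublemma, assembling the two games and concluding $A\in\game^2(\omega n\textnormal{-}\Pi^1_1(x))$ should be routine; I would also note that an induction on $n$, with base case $u_1=\omega_1$ where the codes are countable and $\WO$ suffices, provides a convenient scaffold for the per-level count.
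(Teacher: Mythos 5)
Your overall strategy --- code ordinals below $(u_n)^m$ by sharp codes, set up a bidding game for the least drop-out point of $\Diff B$, and push the transfinite content into the moves so that only a bounded-complexity residue survives in the payoff --- has the right shape, and your handling of $m$ via base-$u_n$ Cantor normal form is consistent with the paper, which simply takes $m=1$ without loss of generality because an ordinal below $(u_n)^m$ is still coded by finitely many elements of $\WO_n$ and compared inside $L$ of the run from $u_1,\dots,u_{n-1}$. But there are two genuine gaps. First, your game asks Player I to \emph{refute} the opponent's assertion $z\notin B_\eta$ for a challenged $\eta<\alpha$, i.e.\ to verify the $\Pi^1_3$ fact $z\in B_\eta$, and you propose to do this by running ``the unfolded $\game^2\Pi^1_1$ verification of $B$'' in parallel with the bidding. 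That composition is precisely what cannot be taken for granted: verifying a $\Pi^1_3$ statement already consumes both game quantifiers, and nesting such verifications inside a further bid/challenge layer does not visibly stay within $\game^2$ of anything at the $\Pi^1_1$ level. The paper's game avoids every positive $\Pi^1_3$ verification by dualizing: writing the code set of $B$ as $\{(w,y):\forall r\,(w,y,r)\in C\}$ with $C\in\Sigma^1_2$, Player I plays a code $w\in\WO_n$ for an \emph{odd} drop-out together with his own existential witness $r$ that $(w,y,r)\notin C$, while Player II symmetrically plays a candidate \emph{even} drop-out $w'$ with witness $r'$; I wins iff his own claim checks out and $|w|<|w'|$ whenever II's does. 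Since ``the least drop-out exists and is odd'' is equivalent to ``some odd drop-out lies below every even drop-out,'' this captures $\Diff B$, and the payoff is a Boolean combination of $\Pi^1_2$ facts about the run and sharp-code comparisons, all Shoenfield-absolute to $L[y,w,r,w',r']$.

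Second, and more seriously, your plan for showing that the residual payoff is $\omega n\textnormal{-}\Pi^1_1$ --- a bespoke sublemma on the difference-hierarchy complexity of comparing sharp codes below $u_n$, plus an induction on $n$ assigning ``one $\omega$-block per level'' --- is not carried out; the per-level count is asserted rather than proved, and as written it amounts to re-deriving a nontrivial theorem. The missing ingredient is Martin's theorem from \cite{martin_largest} in the direction you do not invoke: a set of runs of the form $\{z: L[z]\models\theta(z,u_1,\dots,u_{n-1})\}$ is $\game(\omega n\textnormal{-}\Pi^1_1)$. Once the payoff of the single bidding game $H(y)$ is expressed as a first-order statement over $L$ of the run with parameters $u_1,\dots,u_{n-1}$, this one citation gives that the payoff is $\game(\omega n\textnormal{-}\Pi^1_1(y))$ uniformly in $y$, hence determined, and therefore $A=\{y:\text{I wins }H(y)\}$ is $\game^2(\omega n\textnormal{-}\Pi^1_1)$ with no level-by-level accounting at all. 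I would redirect the effort you plan to spend on the sublemma and the induction on $n$ toward locating and applying this correspondence, which is also the engine of the converse Lemma~\ref{lem:game-to-diff-Pi-1-3}.
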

\begin{proof}
  Without loss of generality, we assume $x=0$, $m=1$, and $A\subseteq \mathbb{R}$. Let $B$ be a $\Pi^1_3$ subset of $ u_{\omega} \times \mathbb{R} $  such that $A = \Diff B$. 
Let $B^{*} = \set{(w,y)}{(\left| w \right|,y) \in B}$ be the $\Pi^1_3$ code set  of $B$. 
Let $C\subseteq   \mathbb{R}^3$ be a $\Sigma^1_2$ set such that
\begin{displaymath}
  (w,y) \in B^{*} \eqiv \forall  r (w,y,r) \in C.
\end{displaymath}

Consider the game $H(y)$, where I produces $w,r\in \mathbb{R}$,  II  produces  $w',r'\in \mathbb{R}$. The game is won by I iff both of the following hold:
\begin{enumerate}
\item $w\in \WO_n$,  $\left| w \right|$ is odd, and $(w,y,r) \notin C $.
\item If $w'\in \WO_n$,  $\left| w' \right|$ is even, and $(w',y,r') \notin C$, then $\left| w \right|< \left| w' \right|$.
\end{enumerate}
Therefore, $y \in A$ iff I has a winning strategy in $H(y)$. 

Since $L[y,w,r,w',r']$ is $\Sigma^1_2$-absolute, and since the relation $\left| w \right| \leq \left| w' \right|$ for $w, w' \in \WO_n$ is  definable over $L[y,w,r,w',r']$ from parameters $u_1,\ldots,u_{n-1}$, the payoff set of the game $H(y)$ can be expressed as a first order statement over $L[y,\cdot]$ from parameters $u_1,\ldots,u_{n-1}$. That is, there is a formula $\theta$ such that an infinite run
\begin{displaymath}
(w,r,w',r')
\end{displaymath}
is won by I iff
\begin{displaymath}
L[y,w,r,w',r'] \models \theta(y,w,r,w',r',u_1,\ldots,u_{n-1}).
\end{displaymath}
It follows by Martin \cite{martin_largest} that the payoff set of $H(y)$ is $\game(\omega n\textnormal{-}\Pi^1_1(y))$, uniformly in $y$, hence determined. Hence $A$ is in $\game^2(\omega n\textnormal{-}\Pi^1_1)$.
\end{proof}

\begin{lemma}
  \label{lem:game-to-diff-Pi-1-3}
Assume $\boldDelta{2}$-determinacy. 
Let $ n<\omega$. If $A$ is $\game^2(\omega n \textnormal{-}\Pi^1_1(x))$, then $A$ is $u_{n+2}\textnormal{-}\Pi^1_3(x)$.
\end{lemma}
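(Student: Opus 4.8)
The plan is to prove the reverse inclusion to Lemma~\ref{lem:diff-Pi-1-3-to-game} by unwinding the two game quantifiers and re-expressing ``Player I wins'' through an ordinal game value that lives below $u_{n+2}$ and is computable in $\admistwo{y}$. As in the previous lemma, I would first reduce to $x=0$ and $A\subseteq\mathbb{R}$, and fix witnesses to the hypothesis: write $A = \game B$, where $B\subseteq\mathbb{R}\times\mathbb{R}$ is $\game(\omega n\textnormal{-}\Pi^1_1)$. Thus $y\in A$ iff I wins the integer game $G(y)$ in which a run $p$ is a payoff for I exactly when $(p,y)\in B$, and $(p,y)\in B$ iff I wins a further integer game $G'(p,y)$ whose payoff $E_{p,y}$ is $\omega n\textnormal{-}\Pi^1_1$ uniformly in $(p,y)$. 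The whole point is to detect ``I wins $G(y)$'' by a $\Diff$ of a $\Pi^1_3$ subset of $u_{n+2}\times\mathbb{R}$, and then invoke Definition~\ref{def:diff-Pi-1-3}.

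The first step is to analyze the inner game. This is the converse side of what Martin \cite{martin_largest} supplies and what Lemma~\ref{lem:diff-Pi-1-3-to-game} exploited in the forward direction: since $E_{p,y}$ is $\omega n\textnormal{-}\Pi^1_1$, its $\omega n$ difference levels are coded, via $\WO_n$ and the term functions $\tau^{L[\cdot]}(\cdot,u_1,\ldots,u_{n-1})$, by ordinals below $u_n$, and the inner game is equivalent to an auxiliary clopen game on such ordinals whose winning condition is first order over $L[p,q,y]$ in the parameters $u_1,\ldots,u_{n-1}$. Because $\boldDelta{2}$-determinacy yields determinacy of $\game(\lessthanshort{\omega^2}\textnormal{-}\Pi^1_1)$ games, this auxiliary game has a definite value, and the goal here is to record the predicate $(p,y)\mapsto[(p,y)\in B]$, together with its negation, as decided by an ordinal rank $\rho(p,y)$ that I would bound below $u_{n+1}$.

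The second step folds in the outer game. The payoff of $G(y)$ is now the ordinal condition ``$\rho(p,y)$ is a winning value for I'', referring only to ordinals below $u_{n+1}$; applying Martin's ordinal-game reduction a second time, the outer game becomes equivalent to a clopen game on ordinals whose value $\Theta(y)$ I would bound below $u_{n+2}$, with $y\in A$ iff $\Theta(y)$ is odd in the sense of the $\Diff$ operator. Finally I would transfer this back to the projective hierarchy: the stagewise computation of $\Theta(y)$ is $\Sigma_1$ over $\admistwo{y}$ in the parameters $T_2,y$ and ordinals below $u_{n+2}$, so by Theorem~\ref{thm:BK-KM} the approximating predicates are $\Pi^1_3$, and ``$y\in A$'' takes exactly the form ``there is an odd $\alpha<u_{n+2}$ with $(\eta,y)$ in the approximation for all $\eta<\alpha$ but $(\alpha,y)$ not''. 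This is precisely a $u_{n+2}\textnormal{-}\Pi^1_3$ set.

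The main obstacle I anticipate is the bookkeeping of the ordinal bound: one must show that the two nested ordinal-game analyses never produce values reaching $u_\omega$, but in fact stay below the \emph{finite} uniform indiscernible $u_{n+2}$, so that the bounded quantifier $\exists\alpha<u_{n+2}$ genuinely suffices. This boundedness is where sharps and the indiscernibility of the $u_k$ are essential, via the boundedness of the Skolem-term functions $w\mapsto\tau^{L[w]}(w,u_1,\ldots,u_m)$ below $u_{m+1}$. The constant $u_{n+2}$ carries two indiscernibles of slack over the optimal $u_n$ of the forward direction, but this is harmless: the pointclass equality targeted in Theorem~\ref{thm:pointclass_game_exchange} concerns $\lessthanshort{u_\omega}\textnormal{-}\Pi^1_3$ and $\game^2(\lessthanshort{\omega^2}\textnormal{-}\Pi^1_1)$, where taking the union over $n$ absorbs the slack. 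It may be cleanest to organize the argument as an induction on $n$, peeling off one $\omega\textnormal{-}\Pi^1_1$ difference block at a time and showing that each block advances the difference length of the resulting $\Pi^1_3$ representation by a single $u_k$.
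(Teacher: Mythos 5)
Your first step (handling the inner game quantifier via Martin's analysis, so that $(y,r)\in B$ iff $L[y,r]\models\varphi(y,r,u_1,\ldots,u_n)$) matches the paper. The gap is in your second step. You propose to ``apply Martin's ordinal-game reduction a second time'' to the outer game, but that reduction is only available for $\lessthanshort{\omega^2}\textnormal{-}\Pi^1_1$ payoffs; after the first step the outer game $G(B_y)$ has payoff $\set{r}{L[y,r]\models\varphi(y,r,u_1,\ldots,u_n)}$, which is at the level of $\Delta^1_3$, far beyond the reach of Martin's unraveling. Indeed, even the \emph{determinacy} of this outer game is not free: it is exactly the content of the Kechris--Woodin transfer theorem. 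So the assertion that the outer game ``becomes equivalent to a clopen game on ordinals whose value $\Theta(y)$ is bounded below $u_{n+2}$'' has no supporting mechanism, and the boundedness you invoke (of Skolem-term functions $w\mapsto\tau^{L[w]}(w,u_1,\ldots,u_m)$) pertains to the inner analysis only. This second step is precisely where the entire difficulty of the lemma lives, and it is left unaddressed.

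What the paper actually does is extract an ordinal invariant from the \emph{proof} of Kechris--Woodin rather than from Martin's unraveling. It defines, for indiscernible parameters $\xi_1<\cdots<\xi_n<\eta<\omega_1$, the notion of a Kechris--Woodin non-determined set and calls $z$ stable if it lies in no such set; stability is $\Pi^1_2$ and stable reals exist. It then introduces a wellfounded relation $<^{\xi_1,\ldots,\xi_n,\eta}_y$ on stable reals whose rank function is computed inside $L[y,z]$ by a fixed Skolem term $\tau$, and sets $\beta^z_y=\tau^{L[y,z]}(y,z,u_1,\ldots,u_{n+1})<u_{n+2}$. The key claim is that any $z$ minimizing $\beta^z_y$ (``ultrastable'') already computes a winning strategy for one of the players; consequently ``I wins $G(B_y)$'' is expressed as the existence of $\delta<u_{n+2}$ such that some stable $z$ has $\beta^z_y=\delta$ (a $\Sigma^1_3$ condition on $(\delta,y)$) while every stable $z$ with $\beta^z_y\leq\delta$ computes a winning strategy for I (a $\Pi^1_3$ condition), which assembles into a $u_{n+2}\textnormal{-}\Pi^1_3$ definition. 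Your proposal would need to be rebuilt around some such localization of where a winning strategy appears; as written, the decisive step is missing.
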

\begin{proof}
Without loss of generality, assume $x=0$ and $A \subseteq \mathbb{R}$.   We produce an effective transformation from a $\game^2(\omega n \textnormal{-}\Pi^1_1)$ definition to the desired $u_{n+2}\textnormal{-}\Pi^1_3$ definition. 
By Martin \cite{martin_largest}, if $(y,r) \in \mathbb{R}^2$, $C\subseteq \mathbb{R}$ is  $\omega n \textnormal{-}\Pi^1_1(y,r)$, then there is a  formula $\varphi$ such that 
 {Player I has a winning strategy in $G(C)$}
  {iff } 
  \begin{displaymath}
L[y,r]\models \varphi(y,r,u_1,\ldots,u_n).
\end{displaymath}
The transform from the $\omega n\textnormal{-} \Pi^1_1(y,r)$ definition of $C$ to $\varphi$ is uniform, independent of $(y,r)$. 
Suppose $A = \game B$, where $B\subseteq \mathbb{R}^2$ is $\game(\omega n \textnormal{-}\Pi^1_1)$. Suppose $\varphi$ is a formula such that
\begin{displaymath}
  (y,r) \in B \eqiv~    L[y,r] \models \varphi(y,r,u_1,\ldots,u_n).
\end{displaymath}
To establish a $u_{n+2}\textnormal{-}\Pi^1_3$ definition of $A$, we have to decide which player has a winning strategy in $G(B_y)$, for $y \in \mathbb{R}$.
For ordinals  $\xi_1<\cdots<\xi_n<\eta< \omega_1$, we say that $M$ is a Kechris-Woodin non-determined set with respect to $(y,\xi_1,\ldots,\xi_n,\eta)$ iff
\begin{enumerate}
\item $M$ is a countable subset of $\mathbb{R}$;
\item $M$ is closed under join and Turing reducibility;
\item $\forall \sigma \in M ~ \exists v \in M ~ L_{\eta}[y,\sigma\otimes v] \models \neg\varphi(y,\sigma\otimes  v,\xi_1,\ldots,\xi_n)$;
\item $\forall \sigma \in M ~ \exists v \in M ~ L_{\eta}[y,v\otimes \sigma]  \models \varphi(y, v\otimes \sigma,\xi_1,\ldots,\xi_n)$.
\end{enumerate}
In clause 3, ``$\forall \sigma \in M $'' is quantifying over all strategies $\sigma$ for Player I that is coded in some member of $M$;  $\sigma*v$ is Player I's  response to $v$ according to $\sigma$, and $\sigma\otimes v = (\sigma*v)\oplus v$ is the combined infinite run. Similarly for clause 4, roles between two players being exchanged. Say that $z$ is $(y,\xi_1,\ldots,x_n,\eta)$-stable iff $z$ is not contained in any Kechris-Woodin non-determined set with respect to $(y,\xi_1,\ldots,\xi_n,\eta)$.  $z$ is $y$-stable iff $z$ is $(y,\xi_1,\ldots,\xi_n,\eta)$-stable for all $\xi_1<\ldots<\xi_n<\eta<\omega_1$. The set of $(y,z)$ such that $z$ is $y$-stable is $\Pi^1_2$. 
By the proof of Kechris-Woodin \cite{KW_det_transfer}, for all $y\in \mathbb{R}$, there is $z \in \mathbb{R}$ which is $y$-stable.

Note that if $z$ is $(y,\xi_1,\ldots,\xi_n,\eta)$-stable and $z \leq_T z'$, then $z'$ is $(y,\xi_1,\ldots,\xi_n,\eta)$-stable.
Let $<^{\xi_1,\ldots,\xi_n,\eta}_{y}$ be the following wellfounded relation on the set of $z$ which is $(y,\xi_1,\ldots,\xi_n,\eta)$-stable:
\begin{align*}
  z' <^{\xi_1,\ldots,\xi_n,\eta} _{y} z \eqiv~&  z \text{ is $(y,\xi_1,\ldots,\xi_n,\eta)$-stable}  \wedge z \leq_T z' \wedge \\
&  \forall \sigma \leq_T z~ \exists v \leq_T z' ~L_{\eta}[y,\sigma\otimes v] \models \neg\varphi(y,\sigma\otimes  v,\xi_1,\ldots,\xi_n)\\
&  \forall \sigma \leq_T z~ \exists v \leq_T z' ~ L_{\eta}[y,v\otimes \sigma] \models \varphi(y,v\otimes \sigma,\xi_1,\ldots,\xi_n).
\end{align*}
 Wellfoundedness of $<^{\xi_1,\ldots,\xi_n,\eta}_{y}$ follows from the definition of $(y,\xi_1,\ldots,\xi_n,\eta)$-stableness.  If $z $ is $(y,\xi_1,\ldots,\xi_n,\eta)$-stable, then $<^{\xi_1,\ldots,\xi_n,\eta}_{y} \res \set{z'}{z'<^{\xi_1,\ldots,\xi_n,\eta} _{y}z}$ is a $\boldsigma{1}$ wellfounded relation in parameters $(y,z)$ and  the code of $(\xi_1,\ldots,\xi_n,\eta)$, hence has rank $<\omega_1$ by Kunen-Martin.  If $z$ is $y$-stable,  let $f^z_y$ be the function that sends $(\xi_1,\ldots,\xi_n,\eta)$ to the rank of $z$ in $<^{\xi_1,\ldots,\xi_n,\eta}_{y}$. Then $f^z_y$ is a function into $\omega_1$. By $\Sigma^1_2$-absoluteness between $V$ and $L[y,z]^{\coll(\omega,\eta)}$, we can see $f^z_y \in L[y,z]$. Furthermore, $f^z_y$ is definable over $L[y,z]$ in a uniform way, so there is a $\se{\underline{\in}}$-Skolem term $\tau$ such that for all $(y,z) \in \mathbb{R}^2$, if $z$ is $y$-stable, then
 \begin{displaymath}
f_y^z(\xi_1,\ldots,\xi_n,\eta) = \tau^{L[y,z]}(y,z,\xi_1,\ldots,\xi_n,\eta).
\end{displaymath}
Let
\begin{displaymath}
\beta_y^z =  \tau^{L[y,z]}(y,z,u_1,\ldots,u_{n+1}).
\end{displaymath}
The function
\begin{displaymath}
  (y,z) \mapsto \beta^z_y
\end{displaymath}
 is $\Delta^1_3$ in the sharp codes.
We say that $z$ is  $y$-ultrastable iff $z$ is $y$-stable and   $\beta_y^z = \min\set{\beta_y^w}{w  \text{ is $y$-stable}}$.

\begin{claim}\label{claim:leftmost-branch-winning-strategy}
  If $z$ is $y$-ultrastable, then there is $\sigma \leq_T z$ such that $\sigma$ is a winning strategy for either of the players in $G(B_{y})$.
\end{claim}
\begin{proof}[Proof of Claim~\ref{claim:leftmost-branch-winning-strategy}]
  Suppose otherwise. For any $\sigma \leq_T z$ which is a strategy for either player, pick $w_{\sigma}$ which defeats $\sigma$ in $G(B_y)$. Let $w$ be a real coding $\set{(\sigma,w_{\sigma})}{\sigma \leq_T z}$. By an indiscernability argument, for any $(y,w)$-indiscernibles $\xi_1 <\cdots < \xi_n<\eta$, for any $\sigma \leq_T z$, if $\sigma$ is a strategy for Player I, then
  \begin{displaymath}
    L_{\eta}[y,\sigma \otimes w_{\sigma}] \models \neg\varphi(y,\sigma\otimes w_{\sigma}, \xi_1,\ldots,\xi_n);
  \end{displaymath}
if $\sigma$ is a strategy for Player II, then
  \begin{displaymath}
    L_{\eta}[y,w_{\sigma} \otimes \sigma]  \models \varphi(y,w_{\sigma}\otimes\sigma, \xi_1,\ldots,\xi_n).
  \end{displaymath}
This exactly means
\begin{displaymath}
  w<^{\xi_1,\ldots,\xi_n,\eta} _y z, 
\end{displaymath}
and hence
\begin{displaymath}
  f^w_y(\xi_1,\ldots,\xi_n,\eta) <  f^z_y(\xi_1,\ldots,\xi_n,\eta).
\end{displaymath}
Since $z$ is $y$-stable and $z \leq_T w$, $w$ is $y$-stable. Therefore, $\beta_y^{w}$ is defined and $\beta_y^{w}<\beta_y^z$, contradicting to  $y$-ultrastableness of $z$.
\end{proof}
From Claim~\ref{claim:leftmost-branch-winning-strategy}, if Player I (or II) has a winning strategy in $G(B_{y})$, then for any $y$-ultrastable $z$, there is a winning strategy for Player I (or II) in $G(B_{y})$ which is Turing reducible to $z$. Therefore, Player I has a winning strategy in $G(B_{y})$ iff there is   $\delta<u_{n+2}$ such that
\begin{align}
 \exists  z ~ (& z \text{ is $y$-stable}\wedge \beta_y^z   =\delta)
\label{eq:1}\\
\intertext{and}
   \forall \gamma \leq \delta~ \forall z ~ (& (z \text{ is $y$-stable} \wedge \beta_y^z = \gamma)\to \nonumber \\
&  \exists \sigma \leq_T z   ~(\sigma \text{ is a winning strategy for I in } G(B_{y}))).\label{eq:2}
\end{align}
Note that in~(\ref{eq:1}),
\begin{displaymath}
  \set{(\delta,y)}{ \exists  z  ~ (z \text{ is $y$-stable} \wedge \beta_y^z   =\delta )
}
\end{displaymath}
is a $\Sigma^1_3$ subset of $u_{\omega} \times \mathbb{R}$, and in~(\ref{eq:2}),
\begin{align*}
  \{ (\gamma,y) : \forall z ~ (& (z \text{ is $y$-stable} \wedge \beta_y^z = \gamma)\to \nonumber \\
&  \exists \sigma \leq_T z   ~(\sigma \text{ is a winning strategy for I in } G(B_{y})))\} 
\end{align*}
is a $\Pi^1_3$ subset of $u_{\omega} \times \mathbb{R}$. So $\exists \delta<u_{n+2} (\eqref{eq:1}\wedge \eqref{eq:2})$ is a $u_{n+2}\textnormal{-}\Pi^1_3$ definition of $A$.
\end{proof}

Lemma~\ref{lem:diff-Pi-1-3-to-game} and Lemma~\ref{lem:game-to-diff-Pi-1-3} are concluded in  a simple equality between pointclasses.
\begin{theorem}
  \label{thm:pointclass_game_exchange}
  Assume $\boldDelta{2}$-determinacy. Then for $x \in \mathbb{R}$, 
  \begin{displaymath}
\game^2 ( <\! \omega^2 \text{-}\Pi^1_1(x)) = \:<\!u_{\omega} \text{-}\Pi^1_3(x).
\end{displaymath}
\end{theorem}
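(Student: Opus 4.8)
The plan is to read the theorem off Lemma~\ref{lem:diff-Pi-1-3-to-game} and Lemma~\ref{lem:game-to-diff-Pi-1-3} as a formal consequence, by passing to unions along the two cofinal sequences $(\omega n)_{n\geq 1}$ in $\omega^2$ and $(u_n)_{n\geq 1}$ in $u_\omega$. Fix the real $x$ once and for all; every pointclass below is taken relative to $x$. The whole proof is bookkeeping of ordinal indices, so the first task is to replace the $<$-pointclasses by increasing unions of ordinary difference classes.

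First I would record the two union reductions. The set $\{\omega n : n\geq 1\}$ is cofinal in $\omega^2$, and the $\Pi^1_1$ difference hierarchy is monotone in its index (a $\beta\textnormal{-}\Pi^1_1$ set is $\gamma\textnormal{-}\Pi^1_1$ for $\beta\leq\gamma<\omega_1^{CK}$: pad the defining $\Pi^1_1$ set by throwing the interval $[\beta,\gamma)$ into it, which forces every $\Diff$-witness to lie below $\beta$ and hence leaves $\Diff$ unchanged; the comparison with the recursive ordinal $\beta$ keeps the padded set $\Pi^1_1(x)$). Hence
\[
<\!\omega^2\text{-}\Pi^1_1(x) = \bigcup_{n\geq 1}\omega n\textnormal{-}\Pi^1_1(x),
\]
and since $\game^2\Gamma=\{\game^2 C:C\in\Gamma\}$ is applied to one set at a time it commutes with this increasing union, giving
\[
\game^2(<\!\omega^2\text{-}\Pi^1_1(x)) = \bigcup_{n\geq 1}\game^2(\omega n\textnormal{-}\Pi^1_1(x)).
\]
Symmetrically, $\{u_n:n\geq 1\}$ is cofinal in $u_\omega$ and the $\Pi^1_3$ difference hierarchy is monotone (the same padding, now using that the comparison of sharp codes is $\Delta^1_3$, so that the condition ``$\beta\leq|w|<\gamma$'' appearing in the padded code set is $\Delta^1_3$ and the padded set stays $\Pi^1_3(x)$), whence
\[
<\!u_\omega\text{-}\Pi^1_3(x) = \bigcup_{n\geq 1}u_n\textnormal{-}\Pi^1_3(x) = \bigcup_{n\geq 1}u_{n+2}\textnormal{-}\Pi^1_3(x),
\]
the last equality because $(u_{n+2})_{n\geq 1}$ is still cofinal in $u_\omega$.

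With these reductions in place the heart of the argument is the sandwich furnished by the two lemmas. Lemma~\ref{lem:diff-Pi-1-3-to-game} applied with $m=1$ gives $u_n\textnormal{-}\Pi^1_3(x)\subseteq\game^2(\omega n\textnormal{-}\Pi^1_1(x))$, and Lemma~\ref{lem:game-to-diff-Pi-1-3} gives $\game^2(\omega n\textnormal{-}\Pi^1_1(x))\subseteq u_{n+2}\textnormal{-}\Pi^1_3(x)$, so that for every $n\geq 1$
\[
u_n\textnormal{-}\Pi^1_3(x) \subseteq \game^2(\omega n\textnormal{-}\Pi^1_1(x)) \subseteq u_{n+2}\textnormal{-}\Pi^1_3(x).
\]
Taking the union over $n$ and substituting the three displayed identities for the endpoints, the three unions collapse to one:
\[
<\!u_\omega\text{-}\Pi^1_3(x) = \bigcup_{n\geq 1}u_n\textnormal{-}\Pi^1_3(x) = \bigcup_{n\geq 1}\game^2(\omega n\textnormal{-}\Pi^1_1(x)) = \game^2(<\!\omega^2\text{-}\Pi^1_1(x)),
\]
which is exactly the asserted equality.

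I do not expect a genuine obstacle here, only two points deserving care. The first is the monotonicity of the two difference hierarchies and the verification that the padding preserves $\Pi^1_1(x)$, respectively $\Pi^1_3(x)$, complexity; for the $\Pi^1_3$ case this is precisely where the $\Delta^1_3$-ness of the comparison of sharp codes enters. The second is the index shift by $2$ in Lemma~\ref{lem:game-to-diff-Pi-1-3}, which might look like a loss when going around the loop; it is harmless because the conclusion is phrased with the $<$-operator, and both $(\omega n)_{n}$ and $(u_{n+2})_{n}$ remain cofinal in $\omega^2$ and $u_\omega$, so the shift is absorbed upon passing to the union.
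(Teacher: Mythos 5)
Your proposal is correct and is essentially the paper's own argument: the paper derives Theorem~\ref{thm:pointclass_game_exchange} directly from Lemma~\ref{lem:diff-Pi-1-3-to-game} and Lemma~\ref{lem:game-to-diff-Pi-1-3} as a ``simple equality between pointclasses,'' i.e.\ exactly your sandwich $u_n\textnormal{-}\Pi^1_3(x)\subseteq\game^2(\omega n\textnormal{-}\Pi^1_1(x))\subseteq u_{n+2}\textnormal{-}\Pi^1_3(x)$ followed by taking unions along the cofinal sequences. The monotonicity/padding justifications you spell out are left implicit in the paper but are the right way to fill them in.
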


\begin{definition}\label{def:OT2x}
  \begin{displaymath}
    \mathcal{O}^{T_2,x} = \{ (\gcode{\varphi},\alpha) : \varphi \text{ is a $\Sigma_1$-formula}, \alpha<u_{\omega}, \admistwo{x} \models \varphi(T_2,x,\alpha)\}.
  \end{displaymath}
\end{definition}

$\mathcal{O}^{T_2,x}$ is the $u_{\omega}$-version of Kleene's $\mathcal{O}$ relative to $(T_2,x)$. It is called $\mathcal{P}_3^x$ in \cite{Kechris_Martin_II}.

\begin{definition}\label{def:2sharp}
\begin{displaymath}
  x^{2\#}_{n} = \{(\gcode{\varphi},\gcode{\psi}) :  \exists \alpha<u_n ( (\gcode{\varphi},\alpha) \notin \mathcal{O}^{T_2,x}\wedge \forall \eta<\alpha (\gcode{\psi},\eta) \in \mathcal{O}^{T_2,x} )\}.
\end{displaymath}
\begin{displaymath}
  x^{2\#} = \{(n,\gcode{\varphi},\gcode{\psi}) :  n<\omega \wedge (\gcode{\varphi},\gcode{\psi}) \in x^{2\#}_n\}.
\end{displaymath}
\end{definition}

$\mathcal{O}^{T_2,x}$ splits into $\omega$ many parts $(\mathcal{O}^{T_2,x} \cap (\omega \times u_n))_{n<\omega}$. Each part is squeezed into a real $x^{2\#}_n$ by applying the difference operator on its second coordinate. The join of $(x^{2\#}_n)_{n<\omega}$ is  $x^{2\#}$. 
In particular, $x^{2\#}_0$ is Turing equivalent to the good universal $\Pi^1_3$ real, which is called the $\Delta^1_3$-jump of $x$. Each $x^{2\#}_n$ belongs to $\admistwo{x}$, but $x^{2\#} \notin \admistwo{x}$. The distinction between $x^{2\#}_0$ and $x^{2\#}$ does not have a lower level analog. 

The expression of $0^{2\#}$ generalizes Kleene's $\mathcal{O}$ to the higher level. 
Note that the transformations between $\game^2(\lessthanshort{\omega^2}\textnormal{-}\Pi^1_1(x))$ and $\lessthanshort{u_{\omega}}\textnormal{-}\Pi^1_3(x)$ definitions in Theorem~\ref{thm:pointclass_game_exchange} are uniform. Applying Theorem~\ref{thm:pointclass_game_exchange}  to the space $\mathcal{X} = \omega$, in combination with Theorem~\ref{thm:BK-KM}, we get the equivalence between $x^{2\#}$ and $M_1^{\#}(x)$.

\begin{theorem}\label{thm:2sharp-equivalent-to-M1sharp}
 Assume $\boldDelta{2}$-determinacy.  Then $x^{2\#}$ is many-one equivalent to $M_1^{\#}(x)$, the many-one reductions being independent of $x$.
\end{theorem}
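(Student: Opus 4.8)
The plan is to read off from Definition~\ref{def:2sharp} that $x^{2\#}$ is an effective enumeration, uniform in $x$, of the lightface $\lessthanshort{u_{\omega}}\text{-}\Pi^1_3(x)$ subsets of $\omega$, and then to pass through the pointclass identity $\game^2(\lessthanshort{\omega^2}\text{-}\Pi^1_1(x)) = \lessthanshort{u_{\omega}}\text{-}\Pi^1_3(x)$ of Theorem~\ref{thm:pointclass_game_exchange} together with Neeman's theorem \cite{nee_opt_I,nee_opt_II} that $M_1^{\#}(x)$ is the good universal $\game^2(\lessthanshort{\omega^2}\text{-}\Pi^1_1(x))$ real. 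The one structural point to keep in mind is that neither $x^{2\#}$ nor $M_1^{\#}(x)$ lies in $\lessthanshort{u_{\omega}}\text{-}\Pi^1_3(x)$ itself: each is a recursive join of sections whose difference ranks are cofinal below the limit (the thresholds $u_n$ for $x^{2\#}$, the ranks $\omega n$ for $M_1^{\#}(x)$). Hence the equivalence will be established by a uniform join of reductions rather than by a naive ``in the class and universal'' argument.

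First I would record the universality of $x^{2\#}$. Given any $A \subseteq \omega$ that is $\lessthanshort{u_{\omega}}\text{-}\Pi^1_3(x)$, fix $n \geq 1$ with $A$ being $u_n\text{-}\Pi^1_3(x)$ (possible since the $u_n$ are cofinal in $u_{\omega}$ and the difference hierarchy is monotone in its rank). As $u_n$ is a limit ordinal, the remark following Definition~\ref{def:diff-Pi-1-3}, an effective instance of Theorem~\ref{thm:BK-KM}, produces $\Sigma_1$-formulas $\varphi, \psi$ with $m \in A$ iff $\admistwo{x} \models \exists \alpha < u_n (\forall \eta < \alpha\, \varphi(\eta, m, T_2, x) \wedge \neg \psi(\alpha, m, T_2, x))$. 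Hard-coding the integer $m$ into the formulas and matching the two formulas to the roles they play in Definition~\ref{def:2sharp} yields a recursive, $x$-independent map $m \mapsto (n, \gcode{\varphi_m}, \gcode{\psi_m})$ witnessing $A \leq_m x^{2\#}$. Conversely, each section $x^{2\#}_n$ is $u_n\text{-}\Pi^1_3(x)$ directly from its definition and Theorem~\ref{thm:BK-KM}, hence $\game^2(\omega n\text{-}\Pi^1_1(x))$ by Lemma~\ref{lem:diff-Pi-1-3-to-game}, with all conversions uniform in $n$ and in $x$.

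Now I would prove the two reductions by the uniform join. For $x^{2\#} \leq_m M_1^{\#}(x)$: each $x^{2\#}_n$ is $\game^2(\omega n\text{-}\Pi^1_1(x)) \subseteq \game^2(\lessthanshort{\omega^2}\text{-}\Pi^1_1(x))$, so the good parametrization behind Neeman's universal real supplies reductions $f_n$ with $x^{2\#}_n \leq_m M_1^{\#}(x)$ via $f_n$ and $n \mapsto f_n$ recursive and independent of $x$; since $x^{2\#} = \bigoplus_n x^{2\#}_n$, the single recursive function $(n, a, b) \mapsto f_n(a, b)$ reduces $x^{2\#}$ to $M_1^{\#}(x)$. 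For $M_1^{\#}(x) \leq_m x^{2\#}$: each section $(M_1^{\#}(x))_e$ lies in $\game^2(\lessthanshort{\omega^2}\text{-}\Pi^1_1(x)) = \lessthanshort{u_{\omega}}\text{-}\Pi^1_3(x)$ by Theorem~\ref{thm:pointclass_game_exchange}, and the good parametrization lets one read off from $e$ a $\lessthanshort{u_{\omega}}\text{-}\Pi^1_3(x)$-definition of that section, whence the universality map of the previous paragraph delivers reductions $g_e$ with $(M_1^{\#}(x))_e \leq_m x^{2\#}$ via $g_e$ and $e \mapsto g_e$ recursive and $x$-independent; the uniform join $(e, m) \mapsto g_e(m)$ reduces $M_1^{\#}(x) = \bigoplus_e (M_1^{\#}(x))_e$ to $x^{2\#}$. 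Combining the two directions gives $x^{2\#} \equiv_m M_1^{\#}(x)$ with reductions independent of $x$.

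The main obstacle I expect is entirely one of uniformity bookkeeping. The crux is that the difference rank --- the level $\omega n$ of $\lessthanshort{\omega^2}\text{-}\Pi^1_1$, equivalently the threshold $u_n$ of the difference in $\lessthanshort{u_{\omega}}\text{-}\Pi^1_3$ --- must be recoverable recursively and uniformly from the relevant index in both parametrizations, for it is precisely this uniformity that legitimizes the join trick for pointclasses that are not closed under recursive join. Concretely, one must check that the three effective conversions --- the Becker-Kechris-Martin back-and-forth of Theorem~\ref{thm:BK-KM}, the game/difference transformation of Theorem~\ref{thm:pointclass_game_exchange}, and the good parametrization behind Neeman's equivalence --- compose into single recursive many-one reductions, and that the resulting maps genuinely do not mention $x$. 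Matching the indices $u_n \leftrightarrow \omega n$ correctly, as already calibrated in Lemmas~\ref{lem:diff-Pi-1-3-to-game} and~\ref{lem:game-to-diff-Pi-1-3}, is the one place where the exact ordinal arithmetic must be respected.
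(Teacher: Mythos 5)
Your proposal is correct and follows essentially the same route as the paper, which proves the theorem by combining the uniformity of the transformations in Theorem~\ref{thm:pointclass_game_exchange} (applied to $\mathcal{X}=\omega$) with Theorem~\ref{thm:BK-KM} and Neeman's identification of $M_1^{\#}(x)$ with the good universal $\game^2(\lessthanshort{\omega^2}\text{-}\Pi^1_1(x))$ real. The paper leaves the join/uniformity bookkeeping implicit in a short paragraph; you have simply written it out, including the correct calibration $u_n \leftrightarrow \omega n$ from Lemmas~\ref{lem:diff-Pi-1-3-to-game} and~\ref{lem:game-to-diff-Pi-1-3}.
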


$0^{2\#}$ is essentially a fancy way of expressing $y_3$, the leftmost real of $\widehat{T_2}$ which is used in the standard uniformization argument.  
$T_2$ and $y_{3}$ are used in  \cite{martin_solovay_basis_1969} to show that every nonempty $\Sigma^1_3$ set of reals contains a member which is recursive in $y_{3}$, or in our terminology, recursive in $0^{2\#}$. 
Basis theorems can also be proved with inner model theory. If $M_{2n-1}^{\#}$ exists, then every nonempty $\Sigma^1_{2n+1}$ set of reals contains a member recursive in $M_{2n-1}^{\#}$ (cf.  \cite{steel_projective_wo_1995,steel-handbook}). At higher levels, the leftmost real basis arguments are investigated in \cite{Q_theory}. It is shown by Harrington (modulo Neeman \cite{nee_opt_I,nee_opt_II}) that under $\boldDelta{2n}$-determinacy, there is a $\Delta^1_{2n+1}$-scale on a $\Delta^1_{2n+1}$ set whose leftmost real $y_{2n+1}$ is $\Delta^1_{2n+1}$-equivalent to $M_{2n-1}^{\#}$ and such that every nonempty  $\Sigma^1_{2n+1}$ set contains a real recursive in $y_{2n+1}$.  It is asked in \cite[Conjecture 11.2]{Q_theory} whether $y_{2n+1}$ is Turing equivalent to $M_{2n-1}^{\#}$. Theorem~\ref{thm:2sharp-equivalent-to-M1sharp} solves this conjecture in the $n=1$ case in an effective manner.

\section*{Acknowledgements}
The breakthrough ideas of this series of papers were obtained during the AIM workshop on Descriptive inner model theory, held in Palo Alto, and the Conference on Descriptive Inner Model Theory, held in Berkeley, both in June, 2014. 
The author greatly benefited from conversations with Rachid Atmai and Steve Jackson that took place in these two conferences. 
The final phase of this paper was completed whilst the author was a visiting fellow at the Isaac Newton Institute for Mathematical Sciences in the programme `Mathematical, Foundational and Computational Aspects of the Higher Infinite' (HIF) in August and September, 2015 funded by NSF Career grant DMS-1352034 and EPSRC grant EP/K032208/1.

\bibliography{sharp}{}
\bibliographystyle{plain}

\end{document}